\newtheorem{theorem}{Theorem}
\newtheorem{lemma}[theorem]{Lemma}
\newtheorem{corollary}[theorem]{Corollary}
\newtheorem{remark}{Remark}
\newtheorem{definition}{Definition}
\newtheorem{assumption}{Assumption}
\newtheorem{proof}{Proof}
\newcommand\BibTeX{{\rmfamily B\kern-.05em \textsc{i\kern-.025em b}\kern-.08em
T\kern-.1667em\lower.7ex\hbox{E}\kern-.125emX}}
\title{\LARGE \bf Feedback Controller Sparsification for a Class of Linear Systems with Parametric Uncertainties}  
\author{Reza Arastoo$^\dagger$  and MirSaleh Bahavarnia$^\dagger$
\thanks{$\dagger$ R. Arastoo and M. Bahavarnia are with the Department  of
  Mechanical Engineering  and  Mechanics,  Packard  Lab.,  Lehigh
  University,  Bethlehem, PA. Email Addresses: {\tt\small
    \{reza.arastoo,msbmo2008\}@gmail.com}}  }
\begin{document}
\maketitle
\thispagestyle{empty}
\pagestyle{plain}

\begin{abstract}
\noindent We consider the problem of output feedback controller sparsification for systems with parametric uncertainties. We develop an optimization scheme that minimizes the performance deterioration caused by the sparsification process, while enhancing sparsity pattern of the feedback gain. In order to improve temporal proximity of an existing closed-loop system and its sparsified counterpart, we also incorporate an additional constraint into the problem formulation so as to bound the variation in the system output pre and post sparsification. We also show that the resulting non-convex optimization problem can be equivalently reformulated into a rank-constrained optimization problem. We then formulate a bi-linear minimization program along with an iterative algorithm to obtain a sub-optimal solution which satisfies the rank constraint with arbitrary tolerance. Lastly, a sub-optimal sparse controller design for IEEE 39-bus New England power network is utilized to showcase the effectiveness of our proposed method.
\end{abstract}




\section{Introduction}
\vspace{-2pt}
In control theory, there has always been a desire to achieve the best possible performance, while taking into account the feasibility and cost of the communication between the subsystems. The reason behind such yearning is mostly rooted in the fact that unlike small-scale dynamical systems, where centralized control methodologies can efficiently be applied owing to the availability of information from all subsystems, the subsystem level information is not globally accessible throughout the network in medium to large scale systems. With the emergence and growth of ultra large-scale interconnected systems, such as power grids, transportation systems, and wireless data networks, exploiting the characteristics of the underlying structure of the system in the control design has become inescapable. Hence, the concepts of distributed and decentralized controllers have received increasing attention in recent years \cite{Polyak:2013,Bamieh:2002,Lavaei:2013,Lin:2011}.

It has been known that optimal controller design under controller structural constraints is a challenging problem. Nonetheless, numerous studies have been carried out to either propose controller design frameworks or reveal inherent structural properties of controllers for special classes of systems \cite{Rotkowitz:2006,Wang:2016ACC1,Wang:2016ACC2,Bamieh:2005}. Another concern in the design of large-scale control systems is the number of communication links between the subsystems which poses major issues especially when establishing links between nodes is very costly. Sparsifying the controller gain leads to fewer information pathways as well as fewer controller sensors and actuators. As a result, the design of controller gains with minimum number of non-zero entries can mitigate the communication overflow issues emergent in large interconnected systems. In the sparsity-promoting control problem, the ultimate objective is to minimize the number of feedback links without losing much performance. This is achieved by incorporating additional functions into the optimization cost function to penalize the number of communication links. The problem has been addressed by a number of researchers, who opted for various techniques to tackle the inherently non-convex problem \cite{Polyak:2013,Lin:2013,Arastoo:2014,Bahavarnia:2015,Lin:2012,Wytock:2013}.

For example, in \cite{Lin:2013,Lin:2012} the Alternating Direction Method of Multipliers is utilized to handle the non-convex terms in the problem formulation. In \cite{Arastoo:2014}, the authors proposed a novel framework in which all non-convexities are lumped into a rank constraint further enabling it to address output feedback problems with norm constraints on the input/output signals. In some recent papers an unconventional approach to synthesize near optimal sparse controllers has been adopted. This proposed sparse controller design framework is founded based on the assumption that a pre-designed well-performing controller is available and the ultimate goal is to obtain a sparse feedback controller approximating the attributes and qualities of the original well-preforming controller \cite{Arastoo:2016CDC,Arastoo:2015,Fattahi:2015CDC,Fattahi:2016Allerton}. 

In this paper, we extend the work published in \cite{Arastoo:2016CDC,Arastoo:2015}. In \cite{Arastoo:2015}, deterministic linear systems are considered and it is shown that the optimal sparse control problem for this class of systems can be solved by applying an ADMM algorithm to its equivalent rank-constrained optimization problem.  The work published in \cite{Arastoo:2016CDC} expanded the applicability of the method introduced in \cite{Arastoo:2015} to linear systems with parametric time-varying uncertainties. We showed that by utilizing $\mathcal{H}_2$ and $\mathcal{H}_\infty$ control theorems \cite{Xie:1992h,Xie:1992}, this novel approach can be easily applied to design sparse robust controllers by equivalently reformulating the original problem into a rank-constrained optimization where all non-convexities are collected into a rank constraint. The next notable improvement in \cite{Arastoo:2016CDC} is that the ADMM algorithm, used in \cite{Arastoo:2015} with no proof of convergence, is replaced by our novel convergent algorithm, which employs a bi-linear optimization to reach the sub-optimal solution of the rank-constrained optimization problem. In this paper, we provide detailed proof of all lemmas and theorems presented in \cite{Arastoo:2016CDC} without proofs. We also show that the optimization parameters can be tuned such that the optimal solution of our proposed minimization problem satisfies the rank constraint with arbitrary tolerance. Furthermore, we present discussions on the convergence of the proposed solving algorithm and provide remarks on the choice of the weight matrix to improve the sparsification performance without adversely affecting the convergence of the algorithm. The simulation section is also substantially extended to provide more vivid insights into advantages of applying the proposed procedure to synthesize sparse controllers for power networks. It further examines in more detail the impact of the structure and magnitude of the power network uncertainties on the robustness of the closed-loop systems as well as the sparsity level of the controllers.

This paper is structured as follows. Section \ref{Notas} provides key definitions and notations used throughout the paper. In Section \ref{sec:formulation}, we formally state the problem we aim to solve. In Sections \ref{sec:equ_ref} and \ref{sec:fix_rank_ref}, we elaborate how our problem can equivalently be reformulated into an optimization problem constrained to several linear matrix inequalities and a fixed rank constraint. Section \ref{sec:algorithm} provides insight into our proposed algorithm and states several related results. The results of our numerical simulations are presented in Section \ref{sec:simul}. Finally, we end with concluding remarks in Section \ref{sec:conc}.
\vspace{-6pt}
\section{Notations} \label{Notas}
\vspace{-2pt}
Throughout this paper, matrices are customarily referred to with capital letters, and the entries are referred to with the corresponding lower-case letters, using subscripts. The vectors, on the other hand, are symbolized by lower-case letters with components denoted by the same letter using subscripts. A unit vector with its $i^{th}$ entry equal to one is denoted by $e_i$. The set of real numbers is denoted by $\mathbb{R}$. The space of $n$ by $m$ matrices with real entries is indicated by $\mathbb{R}^{n\times m}$. The set of real matrices with non-negative (positive) entries is represented by $\mathbb{R}^{n\times m}_+$ ($\mathbb{R}^{n\times m}_{++}$). The $n$ by $n$ identity matrix is denoted by $I_{n}$. The vector of singular values of matrix $X$ is denoted by $\sigma(X)$.
The entry-wise product of two matrices, i.e., Hadamard product, is represented by $\circ$. If $X=\left[x_{ij}\right]$, then the matrix $|X|$ is the entry-wise absolute value of $X$, i.~e. $|X|=\left[|x_{ij}|\right]$. The number of non-zero entries of a matrix is denoted by $\| . \|_0$ while $\|.\|_1$ denotes $\ell_1$ norm, and the $\mathcal{L}_2$-norm is defined by 
\begin{align*}
\|x\|_{\mathcal{L}_{2}(\mathbb{R}^n)} := \Big(\int_0^{\infty}\|x(t)\|^2_2~dt\Big)^{1/2}.
\end{align*}
Whenever it is not confusing, we use $\mathcal{L}_2$ instead of $\mathcal{L}_{2}(\mathbb{R}^n)$.
$\mathbf{Tr(.)}$ and $\mathbf{rank}(.)$ denote the trace and rank of the matrix operands, respectively. The operator $\mathbf{diag}(.)$ constructs block diagonal matrix from input arguments.
\begin{definition}\label{rank-tol}
For a given $\epsilon>0$ and matrix $X$, we say that rank of $X$ is $k$ with tolerance $\epsilon$, and it is denoted by  $\mathbf{rank}(X;\epsilon)$, if exactly $k$ singular values of $X$ are larger than or equal to $\epsilon$.
\end{definition}
 A matrix is said to be Hurwitz if all of its eigenvalues lie within the open left half of the complex plane. A real symmetric matrix is said to be positive definite (semi-definite) if all of its eigenvalues are positive (non-negative). $\mathbb{S}^n_{++}$ ($\mathbb{S}^n_{+}$) denotes the space of positive definite (positive semi-definite) real symmetric matrices, and the notation $X\succeq Y$ ($X\succ Y$) means $X-Y\in \mathbb{S}^n_+$ ($X-Y\in \mathbb{S}^n_{++}$). 
\begin{remark}
For simplicity of our notations, we will use a new notation in statements of theorems, where we use an asterisk '*' to represent the upper triangular sub-blocks of symmetric matrices. Moreover, in the occasions when the optimal solutions of the optimization problems in these theorems do not depend on some of the sub-blocks of matrices, we use a dash '-' to represent such sub-blocks with no apparent utilization in the problem.
\end{remark}
\vspace{-6pt}
\section{Problem Formulation}\label{sec:formulation}
\vspace{-2pt}
\subsection{LTI Systems with Parametric Uncertainties}
The focus of this paper is on the following class of uncertain linear time-invariant (LTI) systems that are defined by the state space realization\footnote{It is assumed that the pair $(A,B_1)$ is controllable and $(A,C)$ is detectable.}
\begin{align}\label{eq:system}
\dot{x}(t)&= [A+\Delta_A]x(t)+[B_1+\Delta_{B_1}]u(t)+B_2d(t) \notag\\
y(t)&=C x(t),
\end{align}
where $x(t)\in \mathbb{R}^n$ is the state vector, $u(t)\in \mathbb{R}^{m}$ is the control input and $d(t)\in \mathbb{R}^{p}$ represents the exogenous disturbance input. We assume that the matrices $A\in\mathbb{R}^{n \times n}$, $B_1\in \mathbb{R}^{n \times m}$, $B_2\in \mathbb{R}^{n \times p}$, and $C\in \mathbb{R}^{q \times n}$  are constant real matrices describing the dynamics of the nominal system, whereas $\Delta_A$ and $\Delta_{ B_1}$ represent the parameter uncertainties of the matrices $A$ and $B_1$, respectively. In this paper, we consider a special uncertainty structure expressed by 
\begin{align}\label{eq:uncert}
\left[\begin{array}{cc} \Delta_A & \Delta_{B_1}\end{array}\right]=D\Delta \left[\begin{array}{cc} E_{A}&E_{B_1}\end{array}\right],
\end{align}
where $D$, $E_A$ and $E_{B_1}$ are known constant real matrices with appropriate dimensions, which characterize the structure of the uncertainties, while $\Delta$ is an unknown $i$ by $j$ real matrix which is constrained by
\begin{align} \label{eq:uncertainty}
\Delta^{\text T}\Delta \preceq \rho^2 I_j.
\end{align}
This class of uncertain linear systems was initially reported by Petersen in papers \cite{Petersen:1987,Petersen:1988}
and later thoroughly addressed by Khargonekar {\emph et al.} \cite{Khargonekar:1990}.
\subsection{Controller Sparsification via \texorpdfstring{$\mathcal{H}_p$}{Hp} Approximations}
Suppose that a pre-designed well-performing controller, namely ${\hat{K}}$, is readily available and the nominal system controlled by such a controller, represented by $\hat{\mathcal{S}}$, has all the desired characteristics. 
The objective is to synthesize a constant gain output feedback controller of the form
\begin{align}\label{eq:controller}
u(t)=Ky(t), \:\:\: K\in \mathcal{K}
\end{align}
with minimum number of non-zero entries, while minimizing the performance deterioration from that of the closed-loop system $\hat{\mathcal{S}}$ under parametric uncertainties. In (\ref{eq:controller}), $\mathcal{K}$ denotes a set of admissible feedback gains which holds desirable properties such as pre-defined communication layout. 
\begin{assumption}
It is assumed that the set $\mathcal{K}$ is convex.
\end{assumption}
It should be emphasized that this assumption does not offer any premise on characterization of the set of all stabilizable output feedback controllers. In our follow-up discussions, we will show that if our proposed optimal control design is feasible, then the resulting output feedback controller will be stabilizing and satisfy the structural constraint $K \in \mathcal{K}$. 
There are numerous applications associated with such convexly constrained controller design, such as power grids or multi-UAV systems. It is sometimes practically infeasible to establish some specific communication links between particular nodes due to the nodes distant locations or security issues in networks. There are also cases where the attenuation/amplification in certain feedback paths is upper bounded, due to technological shortcomings. Such restrictions are addressed by forcing the corresponding controller entries to be contained in a convex set.

Our goal is to solve the following $\ell_0$-regularized optimal control problem to compute a sparse output feedback controller under parametric uncertainties 
\begin{subequations}\label{eq:OSP}
\begin{alignat}{3}
& \underset{K,\varepsilon_y,\varepsilon_{\mathcal{S}}}{\textrm{minimize}}~~&&
\varepsilon_{\mathcal{S}}+\lambda_1 \varepsilon_y +\lambda_2 \|{K}\|_{0}\hspace{80pt}&\\
& \mbox{\textrm{subject to}}~~&&K\in\mathcal{K}&\label{eq:OSP_b}\\
& &&\mathcal{S:}~\mbox{Stable}&\label{eq:OSP_c}\\
& &&\|y_{\mathcal{S}}-y_{\hat{\mathcal{S}}}\|_{\mathcal{L}_2}<\varepsilon_y\|d\|_{\mathcal{L}_2}&\label{eq:OSP_d}\\
& && \|\mathcal{S}-\hat{\mathcal{S}}\|^2_{{\mathcal{H}_2}}<\varepsilon_{\mathcal{S}}&\label{eq:OSP_e}
\end{alignat}
\end{subequations}
in which $\|.\|_{{\mathcal{H}_2}}$ is 
the well-known $\mathcal{H}_2$ norm. Nominal closed-loop system $\hat{\mathcal{S}}$ is a previously designed desired optimal closed-loop system with output signal $y_{\hat{\mathcal{S}}}$ and $\mathcal{S}$ is the resulting system by closing the loop using sparse feedback controller $K$. The output signal of $\mathcal{S}$ is denoted by $y_{\mathcal{S}}$. In order to promote sparsity of feedback gain matrix $K$, the $\ell_0$  measure of $K$, which is denoted by $\|{K}\|_{0}$, has been added to the cost function. Two design parameters $\lambda_1$ and $\lambda_2$ are introduced to achieve  desired trade-off between performance loss and sparsity. 

In the optimal control problem (\ref{eq:OSP}),  constraint (\ref{eq:OSP_e}) is included to ensure that the nominal closed-loop system $\hat{\mathcal{S}}$ is well-approximated by a closed-loop system controlled by a sparse controller $K$. To enhance temporal features of our approximation, we also incorporate another requirement into our design scheme, characterized by constraint (\ref{eq:OSP_d}). This constraint guarantees that the energy level of the difference between the output signals of the two closed-loop systems remains under a pre-specified level $\epsilon_y$ when both closed-loop systems are excited by a disturbance input $d$ with unit norm. 

The goal of this paper is to study the effect of parametric uncertainties on the best achievable levels of sparsity. However, finding the optimal solution of the problem (\ref{eq:OSP}) is inherently NP-hard; see our discussion in Section \ref{sec:fix_rank_ref}. In Section \ref{sec:algorithm}, we will propose a tractable approximation algorithm to solve this problem. The following sections discuss the equivalent problem reformulation exploited in numerically solving our optimization problem.
\vspace{-6pt}
\section{Equivalent Reformulation}\label{sec:equ_ref}
\vspace{-2pt}
The first two terms in the cost function of the optimization problem (\ref{eq:OSP}) can be simplified into the $\mathcal{H}_2$/$\mathcal{H}_\infty$  norms of an augmented system, namely $\bar{\mathcal{S}}$, constructed by the following state space realization matrices
\begin{align}\label{eq:mat_bar}
\bar{A}&=\mathbf{diag}(\bar{A}_{11},A+B_1\hat{K}C),\\
\bar{B}&=\left[\begin{array}{cc} B_2^{\text T} &B_2^{\text T} \end{array}\right]^{\text T} ,~~
\bar{C}=\left[\begin{array}{cc} C&-C\end{array}\right],\notag
\end{align}
where $\bar{A}_{11}=[A+\Delta_A]+[B_1+\Delta_{B_1}]KC$. As it can be seen, the system $\bar{\mathcal{S}}$ represents the difference between the nominal system controlled by the pre-designed controller and the uncertain system, stabilized by closing its feedback loop using a sparse controller. Hence, we can re-formulate our problem into the $\mathcal{H}_2$/$\mathcal{H}_\infty$ norm minimization of the augmented system as follows: 
\begin{alignat}{3}
&\underset{K,\varepsilon_{y},\varepsilon_{\mathcal{S}}}{\textrm{minimize}}~\underset{\Delta_A,\Delta_{B_1}}{\textrm{maximize}}~~&& \varepsilon_{\mathcal{S}}+ \lambda_1 \varepsilon_y+\lambda_2 \|K\|_{0} \hspace{45pt}\label{eq:H2HiOSP}\\
&\mbox{\textrm{subject to}}~~&& K\in\mathcal{K},\notag\\
& &&\bar{A}_{11}~\mbox{Hurwitz},\notag\\
& &&\|\bar{C}(sI-\bar{A})^{-1}\bar{B}\|_{\mathcal{H}_\infty}<\varepsilon_y\notag,\\
& &&\| \bar{C}(sI-\bar{A})^{-1}\bar{B} \|^{2}_{\mathcal{H}_2} <\varepsilon_{\mathcal{S}}.\notag
\end{alignat}
In problem (\ref{eq:H2HiOSP}), the attempt is to minimize the worst case gap between the frequency response of the systems in the sense of a weighted sum of  the $\mathcal{H}_2$ and $\mathcal{H}_\infty$ norms. Therefore, unlike the design schemes introduced in \cite{Lin:2013,Arastoo:2014}, the approach proposed in this paper allows us to exploit the advantages offered by other controller design schemes in the sparse controller design.   
In the next section, we show that the optimization problem (\ref{eq:H2HiOSP}) includes bi-linear matrix inequality constraints mainly due to the existence of the Lyapunov stability conditions. Here, we intend to employ the idea of lumping all nonlinear constraints into a rank-constrained problem, proposed in \cite{Arastoo:2014}, to rewrite problem as a rank-constrained optimization. Based on the obtained reformulation, it is possible to either develop heuristics to sub-optimally solve the problem or provide necessary and sufficient conditions for the feasibility of the points with particular desired costs. 
\vspace{-6pt}
\section{Fixed Rank Optimization Reformulation}\label{sec:fix_rank_ref}
\vspace{-2pt}
The approach adopted in this paper is based on solving the problem of sparse controller approximation via rank-constrained optimization. Hence, we start by stating the main lemmas which helps us cast the constraints of the optimization problem as rank-constrained linear matrix inequalities.
\begin{lemma}[\cite{Arastoo:2014}] \label{lem:non_to_rank}
Let $\mathcal{U} \in \mathbb{R}^{n \times n}$, $\mathcal{V} \in \mathbb{R}^{n \times m}$, $\mathcal{W} \in \mathbb{R}^{m \times m}$, and $\mathcal{Y} \in \mathbb{R}^{m \times n}$, with  $\mathcal{U} \succ 0$.
Then, $\mathbf{rank}(\mathcal{M})=n$ if and only if $\mathcal{W}=\mathcal{YUY}^{\text T}$, $\mathcal{V}^{\text T}=\mathcal{YU}$, and $\mathcal{Z}=\mathcal{U}^{-1}$ where
\begin{align*}
\mathcal{M}=\left[\begin{array}{ccc} \mathcal{U} & \mathcal{V} & I_n \\ \mathcal{V}^{\text T} & \mathcal{W} & \mathcal{Y} \\ I_{n}& \mathcal{Y}^{\text T} & \mathcal{Z} \end{array}\right].
\end{align*}
\end{lemma}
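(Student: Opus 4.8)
The plan is to prove both directions by exploiting the two identity blocks of $\mathcal{M}$, which simultaneously pin down a lower bound on the rank and furnish a basis for the row space once the rank is minimal. First I would record the trivial lower bound $\mathbf{rank}(\mathcal{M})\geq n$: the last block row $[\,I_n\ \ \mathcal{Y}^{\text T}\ \ \mathcal{Z}\,]$ carries $I_n$ in its first $n$ columns, so those $n$ rows are linearly independent irrespective of the remaining data. This reduces the theorem to the statement that the minimal value $n$ is attained precisely when the three algebraic identities hold.

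For sufficiency ($\Leftarrow$), assuming $\mathcal{W}=\mathcal{Y}\mathcal{U}\mathcal{Y}^{\text T}$, $\mathcal{V}^{\text T}=\mathcal{Y}\mathcal{U}$ (equivalently $\mathcal{V}=\mathcal{U}\mathcal{Y}^{\text T}$, using $\mathcal{U}=\mathcal{U}^{\text T}$), and $\mathcal{Z}=\mathcal{U}^{-1}$, I would exhibit the explicit rank-$n$ factorization
\[
\mathcal{M}=\begin{bmatrix}\mathcal{U}\\ \mathcal{Y}\mathcal{U}\\ I_n\end{bmatrix}\begin{bmatrix}I_n & \mathcal{Y}^{\text T} & \mathcal{U}^{-1}\end{bmatrix}.
\]
A direct block multiplication recovers all nine blocks of $\mathcal{M}$. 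Each factor contains an $I_n$ block and therefore has rank $n$, so $\mathbf{rank}(\mathcal{M})\leq n$; combined with the lower bound this yields equality.

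For necessity ($\Rightarrow$), I would argue through the row space. When $\mathbf{rank}(\mathcal{M})=n$, the $n$ independent rows of the last block row span the entire row space, so every remaining row is a linear combination of them. Writing the first block row as $P\,[\,I_n\ \ \mathcal{Y}^{\text T}\ \ \mathcal{Z}\,]$ for some $P\in\mathbb{R}^{n\times n}$ and matching the three column blocks forces $P=\mathcal{U}$ from the first block, then $\mathcal{V}=\mathcal{U}\mathcal{Y}^{\text T}$ (i.e.\ $\mathcal{V}^{\text T}=\mathcal{Y}\mathcal{U}$) from the second, and $I_n=\mathcal{U}\mathcal{Z}$, hence $\mathcal{Z}=\mathcal{U}^{-1}$, from the third, where $\mathcal{U}\succ0$ guarantees both invertibility and symmetry. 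Repeating the identical matching for the middle block row, written as $Q\,[\,I_n\ \ \mathcal{Y}^{\text T}\ \ \mathcal{Z}\,]$, gives $Q=\mathcal{V}^{\text T}=\mathcal{Y}\mathcal{U}$ and therefore $\mathcal{W}=Q\mathcal{Y}^{\text T}=\mathcal{Y}\mathcal{U}\mathcal{Y}^{\text T}$, establishing all three identities.

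The block multiplication and the coefficient matching are routine; the only place demanding slight care is the necessity direction, where I must justify that the last block row is a basis of the row space (immediate from its independence together with the rank hypothesis) and must invoke $\mathcal{U}=\mathcal{U}^{\text T}\succ0$ both to move between $\mathcal{V}=\mathcal{U}\mathcal{Y}^{\text T}$ and $\mathcal{V}^{\text T}=\mathcal{Y}\mathcal{U}$ and to legitimize $\mathcal{Z}=\mathcal{U}^{-1}$. I expect no genuine obstacle beyond this bookkeeping.
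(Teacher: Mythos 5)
Your proof is correct, and it is worth noting that the paper itself never proves this lemma: it is imported verbatim from the cited reference \cite{Arastoo:2014}, so there is no in-paper argument to compare against. On its own merits your argument is clean and complete. The lower bound $\mathbf{rank}(\mathcal{M})\geq n$ from the identity block in the last block row is right; the sufficiency direction via the explicit factorization
\begin{align*}
\mathcal{M}=\begin{bmatrix}\mathcal{U}\\ \mathcal{Y}\mathcal{U}\\ I_n\end{bmatrix}\begin{bmatrix}I_n & \mathcal{Y}^{\text T} & \mathcal{U}^{-1}\end{bmatrix}
\end{align*}
checks out block by block (all nine blocks are recovered, using $\mathcal{V}=\mathcal{U}\mathcal{Y}^{\text T}$ and $\mathcal{Z}=\mathcal{U}^{-1}$), and each factor has rank exactly $n$; and the necessity direction via coefficient matching against the basis rows $[\,I_n\ \ \mathcal{Y}^{\text T}\ \ \mathcal{Z}\,]$ correctly extracts $P=\mathcal{U}$, $Q=\mathcal{V}^{\text T}$, and thence all three identities. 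You also correctly flag the two places where the hypothesis $\mathcal{U}\succ 0$ is genuinely needed: invertibility (to conclude $\mathcal{Z}=\mathcal{U}^{-1}$ from $\mathcal{U}\mathcal{Z}=I_n$) and symmetry (to pass between $\mathcal{V}=\mathcal{U}\mathcal{Y}^{\text T}$ and $\mathcal{V}^{\text T}=\mathcal{Y}\mathcal{U}$); note that the paper's conventions make positive definiteness include symmetry, so this is consistent with the statement. The one cosmetic remark is that the consistency of the third block in the middle row ($\mathcal{Y}=Q\mathcal{Z}$) is automatic rather than a constraint, which you implicitly treat correctly by deriving $\mathcal{W}$ from the second block only.
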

The above lemma can be utilized to collect almost all non-convex terms of the optimization problems in one and only one constraint in the form of a rank constraint. There are a number of algorithms proposed to solve rank-constrained optimization problems \cite{Mishra:2013,Yu:2011,Shai:2011,Gao:2010,Absil:2015}. In this manuscript, we aim to render such algorithms applicable in solving our inherently nonlinear controller sparsification problem by collecting various forms of non-convex/combinatorial constraints into a fixed rank constraint. 

As a first step, we show how the $\mathcal{H}_2$ norm of an uncertain system can be formulated by rank-constrained linear matrix inequalities.
\begin{lemma}\label{thm:H2_to_rank_uncert}
Given a strictly proper uncertain linear system $\mathcal{P}$ with state space realization $(\mathcal{A}+\Delta_{\mathcal{A}},\mathcal{B},\mathcal{C})$, where  $\mathcal{A}\in\mathbb{R}^{n \times n}$, $\mathcal{B}\in \mathbb{R}^{n \times m}$, $\mathcal{C}\in \mathbb{R}^{q \times n}$, $\Delta_{\mathcal{A}}=\mathcal{D}\Delta \mathcal{E}$ and $\Delta^{\text T}\Delta\preceq\rho^2 I_j$, then $\mathcal{P}$ is stable and $\|\mathcal{P}\|_{\mathcal{H}_2}^2<\gamma$ if and only if there exists a positive definite matrix $\mathcal{X}\succ0$ and a positive scalar $\varepsilon$ such that
\begin{align*}
\mathbf{Tr}(\mathcal{CX}\mathcal{C}^{\text T})<\gamma,\\
\left[\begin{array}{cc}
\mathcal{Y}_1+\mathcal{Y}_1^{\text T}+\mathcal{BB}^{\text T}+\varepsilon {\rho} \mathcal{DD}^{\text T} & \sqrt{\rho} \mathcal{Y}_2\\
\sqrt{\rho} \mathcal{Y}_2^{\text T} & -\varepsilon I_j \end{array}\right] \prec 0,\\
\mathbf{rank}\left[\begin{array}{ccccc} \mathcal{X} & *&  * & *\\
\mathcal{Y}_1^{\text T} & - &* &*\\
\mathcal{Y}_2^{\text T} & - & -& *\\
I_n & \mathcal{A}^{\text T} & \mathcal{E}^{\text T}&-
\end{array} \right]=n.
\end{align*}
\end{lemma}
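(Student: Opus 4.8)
The plan is to chain together three ingredients: the standard Lyapunov LMI characterization of the $\mathcal{H}_2$ norm, Petersen's lemma to absorb the uncertainty block $\Delta$, and Lemma~\ref{lem:non_to_rank} to lump the resulting bilinear defining relations into the single rank constraint. I would prove both implications. First, I would recall that for a fixed realization $(\mathcal{A}+\Delta_{\mathcal{A}},\mathcal{B},\mathcal{C})$, stability together with $\|\mathcal{P}\|_{\mathcal{H}_2}^2<\gamma$ is equivalent to the existence of $\mathcal{X}\succ 0$ satisfying the strict Lyapunov inequality $(\mathcal{A}+\Delta_{\mathcal{A}})\mathcal{X}+\mathcal{X}(\mathcal{A}+\Delta_{\mathcal{A}})^{\text T}+\mathcal{B}\mathcal{B}^{\text T}\prec 0$ together with $\mathbf{Tr}(\mathcal{C}\mathcal{X}\mathcal{C}^{\text T})<\gamma$ (the strict inequality forces the true controllability Gramian to be dominated by $\mathcal{X}$, so the trace bound certifies the norm). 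For the uncertain plant I then demand a single $\mathcal{X}$ that works for every admissible $\Delta$, i.e. the Lyapunov inequality must hold for all $\Delta$ with $\Delta^{\text T}\Delta\preceq\rho^2 I_j$.

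Next I would substitute $\Delta_{\mathcal{A}}=\mathcal{D}\Delta\mathcal{E}$ and rewrite the uncertain terms as $\mathcal{D}\Delta(\mathcal{E}\mathcal{X})+(\mathcal{E}\mathcal{X})^{\text T}\Delta^{\text T}\mathcal{D}^{\text T}$. Applying Petersen's lemma \cite{Petersen:1987,Khargonekar:1990} to eliminate the quantifier over $\Delta$, the robust Lyapunov inequality holds for all admissible $\Delta$ if and only if there is a scalar $\varepsilon>0$ with
\[
\mathcal{A}\mathcal{X}+\mathcal{X}\mathcal{A}^{\text T}+\mathcal{B}\mathcal{B}^{\text T}+\varepsilon\rho\,\mathcal{D}\mathcal{D}^{\text T}+\varepsilon^{-1}\rho\,\mathcal{X}\mathcal{E}^{\text T}\mathcal{E}\mathcal{X}\prec 0 .
\]
A Schur complement applied to the quadratic term $\varepsilon^{-1}\rho\,\mathcal{X}\mathcal{E}^{\text T}\mathcal{E}\mathcal{X}$ converts this into the $2\times 2$ block inequality stated in the lemma, once I read off $\mathcal{Y}_1=\mathcal{X}\mathcal{A}^{\text T}$ and $\mathcal{Y}_2=\mathcal{X}\mathcal{E}^{\text T}$ (so that $\mathcal{Y}_1+\mathcal{Y}_1^{\text T}=\mathcal{A}\mathcal{X}+\mathcal{X}\mathcal{A}^{\text T}$). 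The precise $\sqrt{\rho}$ and $\varepsilon\rho$ scalings appearing in the block matrix are exactly the normalization that makes the change of variable $\hat{\varepsilon}=\varepsilon\rho$ reconcile the $\Delta^{\text T}\Delta\preceq\rho^2 I_j$ form of Petersen's inequality with the displayed one.

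Finally I would fold the bilinear defining relations $\mathcal{Y}_1=\mathcal{X}\mathcal{A}^{\text T}$ and $\mathcal{Y}_2=\mathcal{X}\mathcal{E}^{\text T}$ into the rank constraint via Lemma~\ref{lem:non_to_rank}. Grouping the two middle block rows and columns (those carrying $\mathcal{A}$ and $\mathcal{E}$) into one combined block, the $4\times 4$ matrix matches the template of Lemma~\ref{lem:non_to_rank} with $\mathcal{U}=\mathcal{X}\succ 0$, $\mathcal{V}=[\,\mathcal{Y}_1\ \ \mathcal{Y}_2\,]$, the off-diagonal block $[\,\mathcal{A}^{\text T}\ \ \mathcal{E}^{\text T}\,]^{\text T}$ playing the role of $\mathcal{Y}$, and the identity block $I_n$ in the corner. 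Its rank equals $n$ precisely when $\mathcal{V}^{\text T}=\mathcal{Y}\mathcal{U}$, which is exactly $\mathcal{Y}_1=\mathcal{X}\mathcal{A}^{\text T}$ and $\mathcal{Y}_2=\mathcal{X}\mathcal{E}^{\text T}$; the remaining conditions of Lemma~\ref{lem:non_to_rank} only pin down the free ($-$) blocks, which enter neither the trace bound nor the $2\times 2$ LMI. For the forward direction I define $\mathcal{Y}_1,\mathcal{Y}_2$ by these formulas and fill in the free blocks to attain rank $n$; for the converse I use the rank constraint to recover the formulas, reverse the Schur complement, and invoke the ``only if'' half of Petersen's lemma to restore robust stability and the guaranteed $\mathcal{H}_2$ bound.

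The main obstacle I anticipate is twofold: keeping the $\rho$-scalings consistent so that Petersen's lemma is applied \emph{losslessly} (this is what makes the stated condition necessary as well as sufficient, rather than merely a sufficient quadratic bound), and correctly matching the block partition of the $4\times 4$ matrix to the three-block template of Lemma~\ref{lem:non_to_rank} so that the grouped middle block yields exactly the two relations $\mathcal{Y}_1=\mathcal{X}\mathcal{A}^{\text T}$, $\mathcal{Y}_2=\mathcal{X}\mathcal{E}^{\text T}$ and nothing more.
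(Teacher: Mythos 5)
Your proof is correct and follows essentially the same route as the paper's: the standard Lyapunov LMI characterization of the $\mathcal{H}_2$ norm, the Petersen/Xie lemma to eliminate the quantifier over $\Delta$ (with the same $\varepsilon\rho$, $\varepsilon^{-1}\rho$ scalings), a Schur complement to reach the $2\times 2$ block inequality, and Lemma~\ref{lem:non_to_rank} to absorb the bilinear relations $\mathcal{Y}_1=\mathcal{X}\mathcal{A}^{\text T}$, $\mathcal{Y}_2=\mathcal{X}\mathcal{E}^{\text T}$ into the rank constraint. The only cosmetic difference is that the paper first writes a rectangular $2\times 3$ block rank condition and then symmetrizes by augmenting rows and columns, whereas you match the symmetric $4\times 4$ matrix to the template of Lemma~\ref{lem:non_to_rank} directly by grouping the $\mathcal{A}$ and $\mathcal{E}$ blocks.
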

\begin{proof}
The system $\mathcal{P}$ is stable with $\mathcal{H}_2$ norm less than $\gamma$ if and only if there exists a positive definite matrix $\mathcal{X}$ such that \cite[p.~210]{Dullerud:2000}
\begin{align*}
\mathbf{Tr}(\mathcal{CXC}^{\text T})<\gamma,\\
(\mathcal{A}+\Delta_{\mathcal{A}})\mathcal{X}+\mathcal{X}(\mathcal{A}+\Delta_{\mathcal{A}})+\mathcal{BB}^{\text T}\prec 0.
\end{align*}
Substituting $\Delta_{\mathcal{A}}=\mathcal{D}\Delta \mathcal{E}$ into the second equation, we have
\begin{align*}
\mathcal{AX}+\mathcal{XA}^{\text T}+\mathcal{BB}^{\text T}+\mathcal{D}\Delta \mathcal{EX}+\mathcal{X}(\mathcal{D}\Delta \mathcal{E})^{\text T}\prec 0.
\end{align*}
Since the term $\mathcal{AX+XA^{\text T}+BB^{\text T}}$ is symmetric and $\Delta^{\text T}\Delta\preceq \rho^2 I_j$, the above linear matrix inequality holds if and only if there exists a positive scalar $\varepsilon>0$ such that \cite{Xie:1992h}
\begin{align*}
\mathcal{AX}+\mathcal{XA}^{\text T}+\mathcal{BB}^{\text T}+\varepsilon {\rho} \mathcal{DD}^{\text T}+\varepsilon^{-1}\rho \mathcal{XE}^{\text T}\mathcal{EX}\prec 0.
\end{align*}
This is equivalent to having 
\begin{align*}
\left[\begin{array}{cc}
\mathcal{AX}+\mathcal{XA}^{\text T}+\mathcal{BB}^{\text T}+\varepsilon {\rho} \mathcal{DD}^{\text T} & \sqrt{\rho}(\mathcal{EX})^{\text T}\\
\sqrt{\rho} (\mathcal{E X}) & -\varepsilon I_j \end{array}\right] \prec 0.
\end{align*}
Applying Lemma \ref{lem:non_to_rank}, the last LMI can equivalently be rewritten as shown below.
\begin{align*}
\left[\begin{array}{cc}
\mathcal{Y}_1+\mathcal{Y}_1^{\text T}+\mathcal{BB}^{\text T}+\varepsilon {\rho} \mathcal{DD}^{\text T} & \sqrt{\rho} \mathcal{Y}_2\\
\sqrt{\rho} \mathcal{Y}_2^{\text T} & -\varepsilon  I_j\end{array}\right] \prec 0,\\
\mathbf{rank}\left[\begin{array}{ccc} \mathcal{X} & \mathcal{Y}_1 & \mathcal{Y}_2\\I_n & \mathcal{A}^{\text T} & \mathcal{E}^{\text T}\end{array} \right]=n.
\end{align*}
Augmenting proper rows and columns to the rank-constrained matrix to make it symmetric completes our proof.
\end{proof}
Similar to Lemma \ref{thm:H2_to_rank_uncert}, which paves the way in casting the $\mathcal{H}_2$ norm term in our optimal controller sparsification problem, as a rank-constrained optimization problem, the $\mathcal{H}_{\infty}$ norm term of problem (\ref{eq:H2HiOSP}) can also be equivalently represented with a set of rank-constrained linear matrix inequalities. In the next lemma, we prove such equivalence, which later helps in accommodating the whole problem of controller sparsification under parametric uncertainties into the framework of rank-constrained optimization.
\begin{lemma}\label{thm:Hi_to_rank_uncert}
Suppose a strictly proper uncertain LTI plant $\mathcal{P}$, represented in the state space triplet $(\mathcal{A}+{\Delta}_{\mathcal{A}},\mathcal{B},\mathcal{C})$, where  $\mathcal{A}\in\mathbb{R}^{n \times n}$, $\mathcal{B}\in \mathbb{R}^{n \times m}$, $\mathcal{C}\in \mathbb{R}^{q \times n}$, ${\Delta}_{\mathcal{A}}=\mathcal{D}{\Delta} \mathcal{E}$ and ${\Delta}^{\text T}{\Delta}\preceq\rho^2 I_j$, then the system is stable with $\mathcal{H}_\infty$ norm less than $\gamma$ if and only if there exists a positive definite matrix $\mathcal{X}\succ0$ and a positive scalar $\varepsilon>0$ satisfying
\begin{align*}
\left[\begin{array}{cccc}
\mathcal{Y}_1+\mathcal{Y}_1^{\text T}+\varepsilon {\rho} \mathcal{D}\mathcal{D}^{\text T} &*&*&*\\
\mathcal{B}^{\text T}& -\gamma I_m& *& *\\
(\mathcal{CX})&0&-\gamma I_q&*\\
\sqrt{\rho}\mathcal{Y}_2^{\text T} & 0 & 0 &-\varepsilon I_j
\end{array}\right] \prec 0,\\
\mathbf{rank}\left[\begin{array}{ccccc} \mathcal{X} & *&  * & *\\
\mathcal{Y}_1^{\text T} & - &* &*\\
\mathcal{Y}_2^{\text T} & - & -& *\\
I_n & \mathcal{A}^{\text T} & \mathcal{E}^{\text T}&-
\end{array} \right]=n.
\end{align*}
\end{lemma}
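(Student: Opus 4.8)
The plan is to replicate the proof of Lemma~\ref{thm:H2_to_rank_uncert} almost verbatim, the sole structural change being that the Lyapunov inequality is replaced by the Bounded Real Lemma. First I would invoke the Bounded Real Lemma in its $\gamma$-balanced form (with $\mathcal{X}$ playing the role of the inverse storage matrix, cf.~\cite{Dullerud:2000}): the realization $(\mathcal{A}+\Delta_{\mathcal{A}},\mathcal{B},\mathcal{C})$ is stable with $\mathcal{H}_\infty$ norm below $\gamma$ if and only if there is a matrix $\mathcal{X}\succ0$ satisfying
\begin{align*}
(\mathcal{A}+\Delta_{\mathcal{A}})\mathcal{X}+\mathcal{X}(\mathcal{A}+\Delta_{\mathcal{A}})^{\text T}+\gamma^{-1}\mathcal{B}\mathcal{B}^{\text T}+\gamma^{-1}\mathcal{X}\mathcal{C}^{\text T}\mathcal{C}\mathcal{X}\prec0.
\end{align*}
Substituting $\Delta_{\mathcal{A}}=\mathcal{D}\Delta\mathcal{E}$ exhibits the uncertainty as the symmetric perturbation $\mathcal{D}\Delta\mathcal{E}\mathcal{X}+\mathcal{X}\mathcal{E}^{\text T}\Delta^{\text T}\mathcal{D}^{\text T}$ added to a symmetric nominal term, exactly as in the previous proof.

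Next I would apply the same robustness result~\cite{Xie:1992h}: since the nominal part is symmetric and $\Delta^{\text T}\Delta\preceq\rho^2 I_j$, the perturbed inequality holds for every admissible $\Delta$ if and only if there exists a scalar $\varepsilon>0$ with
\begin{align*}
\mathcal{A}\mathcal{X}+\mathcal{X}\mathcal{A}^{\text T}+\gamma^{-1}\mathcal{B}\mathcal{B}^{\text T}+\gamma^{-1}\mathcal{X}\mathcal{C}^{\text T}\mathcal{C}\mathcal{X}+\varepsilon\rho\mathcal{D}\mathcal{D}^{\text T}+\varepsilon^{-1}\rho\mathcal{X}\mathcal{E}^{\text T}\mathcal{E}\mathcal{X}\prec0.
\end{align*}
I would then clear the three quadratic terms $\gamma^{-1}\mathcal{B}\mathcal{B}^{\text T}$, $\gamma^{-1}\mathcal{X}\mathcal{C}^{\text T}\mathcal{C}\mathcal{X}$ and $\varepsilon^{-1}\rho\mathcal{X}\mathcal{E}^{\text T}\mathcal{E}\mathcal{X}$ by three successive Schur complements, which introduce the diagonal blocks $-\gamma I_m$, $-\gamma I_q$, $-\varepsilon I_j$ together with the off-diagonal blocks $\mathcal{B}$, $\mathcal{C}\mathcal{X}$ and $\sqrt{\rho}\,\mathcal{X}\mathcal{E}^{\text T}$. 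This yields precisely the $4\times4$ block inequality in the statement, but carrying $\mathcal{A}\mathcal{X}$ where the statement has $\mathcal{Y}_1^{\text T}$ and $\mathcal{X}\mathcal{E}^{\text T}$ where it has $\mathcal{Y}_2$.

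Finally I would invoke Lemma~\ref{lem:non_to_rank} with $\mathcal{U}=\mathcal{X}$ and with the role of $\mathcal{Y}$ played by the vertical stack of $\mathcal{A}$ and $\mathcal{E}$, so that the defining relations $\mathcal{V}^{\text T}=\mathcal{Y}\mathcal{U}$ and $\mathcal{Z}=\mathcal{U}^{-1}$ read $\mathcal{Y}_1^{\text T}=\mathcal{A}\mathcal{X}$, $\mathcal{Y}_2^{\text T}=\mathcal{E}\mathcal{X}$ and $\mathcal{Z}=\mathcal{X}^{-1}$. Replacing the products by the free variables $\mathcal{Y}_1,\mathcal{Y}_2$ turns the inequality into the stated one, while the equivalence of these equalities with the rank-$n$ condition on the augmented matrix is exactly the content of Lemma~\ref{lem:non_to_rank}; padding that matrix with the rows and columns carrying $\mathcal{A}^{\text T}$, $\mathcal{E}^{\text T}$ and the immaterial (dashed) blocks renders it symmetric and reproduces the displayed rank constraint, closing the equivalence in both directions.

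I expect the main obstacle to be bookkeeping rather than depth. One must check that the perturbation enters only the $(1,1)$ block in the one-sided form $\mathcal{D}\Delta(\mathcal{E}\mathcal{X})$ required by~\cite{Xie:1992h}, and that the $\gamma$-balanced version of the Bounded Real Lemma is used rather than the $\gamma^{-2}$ Riccati normalization, since it is the balanced scaling that makes both $\mathcal{B}$ and $\mathcal{C}\mathcal{X}$ appear against a single diagonal block $-\gamma I$. A secondary point worth stating explicitly is that, although $\mathcal{A}$ is constant in this lemma so that $\mathcal{A}\mathcal{X}$ is already affine, the detour through Lemma~\ref{lem:non_to_rank} is what will later absorb the genuine bilinearity that appears once $\mathcal{A}$ is replaced by the controller-dependent matrix $\bar{A}_{11}$; within the present lemma the rank constraint merely encodes the two equalities above exactly.
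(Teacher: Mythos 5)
Your proposal is correct and follows essentially the same route as the paper's proof: Bounded Real Lemma, then the robustness lemma of \cite{Xie:1992h} to eliminate $\Delta$, then a Schur complement to produce the $4\times4$ block LMI with $\mathcal{Y}_1=\mathcal{X}\mathcal{A}^{\text T}$, $\mathcal{Y}_2=\mathcal{X}\mathcal{E}^{\text T}$, and finally Lemma \ref{lem:non_to_rank} to trade those bilinear equalities for the rank-$n$ constraint. The only difference is cosmetic: you invoke the $\gamma$-balanced form of the BRL at the outset, whereas the paper starts from the $\gamma^{-2}$ Riccati form in the storage matrix $\mathcal{Z}$ and reaches your starting inequality by inverting ($\mathcal{Y}=\mathcal{Z}^{-1}$) and scaling ($\mathcal{X}=\gamma\mathcal{Y}$, $\varepsilon=\bar{\varepsilon}\gamma$) after applying the robustness lemma rather than before — an interchange that is harmless since the existential quantifier over $\varepsilon>0$ absorbs the positive scaling.
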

\begin{proof}
Employing Lemma 7.4 in \cite[p.~221]{Dullerud:2000}, the plant $\mathcal{P}$ is stable with $\|\mathcal{P}\|_\infty<\gamma$ if and only if there exists a positive definite matrix $\mathcal{Z}$ such that
\begin{align*}
\mathcal{Z}(\mathcal{A}+\Delta_{\mathcal{A}})+(\mathcal{A}+\Delta_{\mathcal{A}})^{\text T}\mathcal{Z}+\mathcal{C}^{\text T}\mathcal{C}+\gamma^{-2}\mathcal{Z}\mathcal{BB}^{\text T}\mathcal{Z}\prec 0.
\end{align*}
Pre and post multiplying the above LMI by the inverse of $\mathcal{Z}$, namely $\mathcal{Y}$, we have
\begin{align*}
(\mathcal{A}+\Delta_{\mathcal{A}})\mathcal{Y}+\mathcal{Y}(\mathcal{A}+\Delta_{\mathcal{A}})^{\text T}+\mathcal{YC}^{\text T}\mathcal{CY}+\gamma^{-2}\mathcal{BB}^{\text T}\prec 0,\\
\mathcal{AY}+\mathcal{YA}^{\text T}+\gamma^{-2}\mathcal{BB}^{\text T}+\mathcal{YC}^{\text T}\mathcal{CY}+\Delta_{\mathcal{A}} \mathcal{Y+Y}\Delta_{\mathcal{A}}^{\text T}\prec 0.
\end{align*}
Plugging $\Delta_{\mathcal{A}}=\mathcal{D} \Delta \mathcal{E}$ into the previous inequality, we get
\begin{align*}
\mathcal{AY}+\mathcal{YA}^{\text T}+\gamma^{-2}\mathcal{BB}^{\text T}+\mathcal{YC}^{\text T}\mathcal{CY}+(\mathcal{D}\Delta \mathcal{E})\mathcal{Y}+\mathcal{Y}(\mathcal{D}\Delta \mathcal{E})^{\text T}\prec 0.
\end{align*}
Having $\Delta^{\text T}\Delta \preceq \rho^2 I_j$, the above inequality is valid for all acceptable values of $\Delta$ if and only if there exists a positive $\bar{\varepsilon}>0$ for which the following LMI holds.
\begin{align*}
\mathcal{AY}+\mathcal{YA}^{\text T}+\gamma^{-2}\mathcal{BB}^{\text T}+\mathcal{YC}^{\text T}\mathcal{CY}+\bar{\varepsilon}\rho \mathcal{DD}^{\text T}+\bar{\varepsilon}^{-1}\rho \mathcal{Y}\mathcal{E}^{\text T}\mathcal{EY}\prec 0.
\end{align*}
Multiplying both sides of the above inequality by $\gamma$, we will have
\begin{align*}
\mathcal{AX}+\mathcal{XA}^{\text T}+\gamma^{-1}\mathcal{BB}^{\text T}+\gamma^{-1}\mathcal{XC}^{\text T}\mathcal{CX}+\varepsilon \rho \mathcal{DD}^{\text T}+\varepsilon^{-1} \rho \mathcal{X}\mathcal{E}^{\text T}\mathcal{EX}\prec 0,
\end{align*}
where $\mathcal{X}=\gamma \mathcal{Y}$ and $\varepsilon=\bar{\varepsilon}\gamma$. It can effortlessly be verified that the last LMI is the Schur complement of the negative definite constraint
\begin{align*}
\left[\begin{array}{cccc}
\mathcal{Y}_1+\mathcal{Y}_1^{\text T}+\varepsilon {\rho} \mathcal{DD}^{\text T} &*&*&*\\
\mathcal{B}^{\text T}& -\gamma I_m& *& *\\
\mathcal{(CX)}&0&-\gamma I_q&*\\
\sqrt{\rho}\mathcal{EX} & 0 & 0 &-\varepsilon I_j
\end{array}\right] \prec 0,
\end{align*}
where $\mathcal{Y}_1=\mathcal{XA}^{\text T}$. The rest of the proof is a mere application of Lemma \ref{lem:non_to_rank}; hence, omitted. 
\end{proof}
Consequently, we can reformulate the problem (\ref{eq:H2HiOSP}) into a rank-constrained problem, as described in the sequel.
\begin{theorem}
The optimization problem (\ref{eq:H2HiOSP}) is equivalent to the following rank-constrained optimization problem
\begin{alignat}{3} 
&\underset{K,\varepsilon_{y},\varepsilon_{\mathcal{S}}}{\textrm{\emph{minimize}}}~~&& \varepsilon_{\mathcal{S}}+ \lambda_1 \varepsilon_y+\lambda_2 \|K\|_{0} \hspace{85pt} &\label{eq:H2HiOSP_rank}\\
&\mbox{\textrm{\emph{subject to}}}~~&& K\in\mathcal{K},\notag\\
& &&X_r\succ 0,~~ r=1,2,\notag\\
& &&\varepsilon_r>0,~~~r=1,2,\notag\\
& &&\mathbf{Tr}(\bar{C}X_1\bar{C}^{\text T})<\varepsilon_{\mathcal{S}},\notag \\
& &&\left[\begin{array}{cc}
P_1+\bar{B}\bar{B}^{\text T} & *\\
\sqrt{\rho} Y_2^{\text T} & -\varepsilon_1 I_j \end{array}\right] \prec 0, \notag\\
& &&\left[\begin{array}{cccc}
P_2 &*&*&*\\
\bar{B}^{\text T}& -\varepsilon_y I_{p}& *& *\\
(\bar{C}X_2)&0&-\varepsilon_y I_{q}&*\\
\sqrt{\rho}Y_4^{\text T} & 0 & 0 &-\varepsilon_2 I_j
\end{array}\right] \prec 0,\notag \\
& &&\mathbf{rank}(M_1)=2n, \notag 
\end{alignat}
where
\begin{align*}
P_r&=Y_{2r-1}+Y_{2r-1}^{\text T}+\varepsilon_r {\rho} \bar{D}\bar{D}^{\text T},~~~~~~~~~~  r=1,2,\\
M_1&=\left[\begin{array}{ccccc} X_1 & *&  * & *&*\\
Y_1^{\text T} & - &* &*&*\\
Y_2^{\text T} & - & -& *&*\\
X_2& Y_3& Y_4&-&*\\
I_{2n} & A_{cl}^{\text T} & E_{cl}^{\text T}&-&-
\end{array} \right],\\
A_{cl}&=\mathbf{diag}(A+B_1KC,A+B_1\hat{K}C)\in\mathbb{R}^{2n \times 2n},\\
\bar{D}&=\left[\begin{array}{cc} D^{\text T} & 0\end{array}\right]^{\text T}\in\mathbb{R}^{2n \times i},\\
E_{cl}&=\left[\begin{array}{cc} E_A+E_{B_1}KC & 0\end{array}\right]\in\mathbb{R}^{j \times 2n}.
\end{align*}
\end{theorem}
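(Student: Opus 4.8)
The plan is to recognize problem (\ref{eq:H2HiOSP}) as the simultaneous $\mathcal{H}_2$ and $\mathcal{H}_\infty$ analysis of a \emph{single} structured-uncertainty plant of dimension $2n$, and then to invoke Lemmas \ref{thm:H2_to_rank_uncert} and \ref{thm:Hi_to_rank_uncert} with the substitution $n\mapsto 2n$. First I would verify that the augmented realization $(\bar{A},\bar{B},\bar{C})$ in (\ref{eq:mat_bar}) is precisely of the form $(A_{cl}+\bar{D}\Delta E_{cl},\bar{B},\bar{C})$ required by those lemmas. A direct block computation gives
\begin{align*}
\bar{D}\Delta E_{cl}=\left[\begin{array}{c} D \\ 0\end{array}\right]\Delta \left[\begin{array}{cc} E_A+E_{B_1}KC & 0\end{array}\right]=\left[\begin{array}{cc} D\Delta(E_A+E_{B_1}KC) & 0\\ 0 & 0\end{array}\right],
\end{align*}
whose $(1,1)$ block equals $\Delta_A+\Delta_{B_1}KC$ by (\ref{eq:uncert}); hence $A_{cl}+\bar{D}\Delta E_{cl}=\mathbf{diag}(\bar{A}_{11},A+B_1\hat{K}C)=\bar{A}$, and the very same $\Delta$ still obeys $\Delta^{\text T}\Delta\preceq\rho^2 I_j$. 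Thus $\bar{\mathcal{S}}$ is an instance of the plant $\mathcal{P}$ of Lemmas \ref{thm:H2_to_rank_uncert}--\ref{thm:Hi_to_rank_uncert} with $(\mathcal{A},\mathcal{B},\mathcal{C},\mathcal{D},\mathcal{E})=(A_{cl},\bar{B},\bar{C},\bar{D},E_{cl})$. Because those lemmas certify \emph{worst-case} performance over all admissible $\Delta$, the inner maximization over $\Delta_A,\Delta_{B_1}$ in (\ref{eq:H2HiOSP}) is absorbed into the resulting LMIs, which is why no explicit maximization survives in (\ref{eq:H2HiOSP_rank}).

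Next I would handle the two norm constraints one at a time. Applying Lemma \ref{thm:H2_to_rank_uncert} with $\gamma=\varepsilon_{\mathcal{S}}$ converts $\|\bar{\mathcal{S}}\|_{\mathcal{H}_2}^2<\varepsilon_{\mathcal{S}}$ into the trace bound $\mathbf{Tr}(\bar{C}X_1\bar{C}^{\text T})<\varepsilon_{\mathcal{S}}$, the first displayed LMI with $P_1=Y_1+Y_1^{\text T}+\varepsilon_1\rho\bar{D}\bar{D}^{\text T}$, and a rank-$2n$ condition on the block matrix carrying $(X_1,Y_1,Y_2)$ over the bottom block-row $[\,I_{2n}\ A_{cl}^{\text T}\ E_{cl}^{\text T}\,]$. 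Likewise, applying Lemma \ref{thm:Hi_to_rank_uncert} with $\gamma=\varepsilon_y$ (so the disturbance channel has width $p$ and the $\mathcal{H}_\infty$ LMI carries $-\varepsilon_y I_p$ and $-\varepsilon_y I_q$) converts $\|\bar{\mathcal{S}}\|_{\mathcal{H}_\infty}<\varepsilon_y$ into the second displayed LMI with $P_2=Y_3+Y_3^{\text T}+\varepsilon_2\rho\bar{D}\bar{D}^{\text T}$, together with a rank-$2n$ condition on the block matrix carrying $(X_2,Y_3,Y_4)$ over the \emph{same} bottom block-row $[\,I_{2n}\ A_{cl}^{\text T}\ E_{cl}^{\text T}\,]$. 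The objective and the side constraints $K\in\mathcal{K}$, $X_r\succ0$, $\varepsilon_r>0$ carry over verbatim.

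The hard part, and the only nontrivial step, is to collapse these two rank conditions into the single constraint $\mathbf{rank}(M_1)=2n$. The mechanism is that both rank matrices share the identical bottom block-row $[\,I_{2n}\ A_{cl}^{\text T}\ E_{cl}^{\text T}\,]$, whose $2n$ rows are linearly independent owing to the $I_{2n}$ block and therefore already force the rank to be at least $2n$. Consequently a rank of exactly $2n$ is attained if and only if every remaining block-row lies in the row space of this bottom row; reading off coordinates, the $\mathcal{H}_2$ condition becomes $Y_1=X_1A_{cl}^{\text T}$, $Y_2=X_1E_{cl}^{\text T}$ and the $\mathcal{H}_\infty$ condition becomes $Y_3=X_2A_{cl}^{\text T}$, $Y_4=X_2E_{cl}^{\text T}$ (with $X_1,X_2\succ0$ symmetric, these are the genuine Lyapunov products). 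Since $M_1$ stacks the $(X_1,Y_1,Y_2)$ and $(X_2,Y_3,Y_4)$ block-rows over a \emph{single} copy of the shared bottom row, I would argue that $\mathbf{rank}(M_1)=2n$ holds if and only if all four product relations hold at once: the ``if'' direction exhibits each upper block-row as $X_1$ or $X_2$ times the bottom row, while the ``only if'' direction again uses that the bottom $2n$ rows exhaust the rank, forcing every other block-row into their span. The delicate point to check carefully is the bookkeeping of the dash-marked (``$-$'') blocks introduced by the symmetric completion: one must confirm they can be assigned so as to keep $M_1$ symmetric while neither creating additional independent rows nor obstructing the factorization through the shared bottom row. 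Once this is verified, the equivalence between (\ref{eq:H2HiOSP}) and (\ref{eq:H2HiOSP_rank}) follows.
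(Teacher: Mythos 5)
Your proposal is correct and follows essentially the same route as the paper: recognize the augmented closed-loop system $\bar{\mathcal{S}}$ as a structured-uncertainty plant $(A_{cl}+\bar{D}\Delta E_{cl},\bar{B},\bar{C})$ and then apply Lemmas \ref{thm:H2_to_rank_uncert} and \ref{thm:Hi_to_rank_uncert} with $\gamma=\varepsilon_{\mathcal{S}}$ and $\gamma=\varepsilon_y$, respectively. The paper's own proof is just these two observations stated without detail; your additional work --- verifying $\bar{D}\Delta E_{cl}=\mathbf{diag}(\Delta_A+\Delta_{B_1}KC,\,0)$ and showing that the two rank-$2n$ conditions collapse into the single constraint $\mathbf{rank}(M_1)=2n$ because both block matrices share the bottom block-row $[\,I_{2n}\ A_{cl}^{\text T}\ E_{cl}^{\text T}\,]$ (with the dash blocks assignable consistently, e.g.\ via the factorization $M_1=V X_1^{-1}V^{\text T}$, $V=[X_1;\,A_{cl}X_1;\,E_{cl}X_1;\,X_2;\,I_{2n}]$) --- is exactly the bookkeeping the paper leaves implicit, and it checks out.
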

\begin{proof}
It can be observed that the closed-loop system can be represented using the state representation $(A_{cl}+\bar{\Delta}_A,\bar{B},\bar{C},0)$, where $\bar{B}$ and $\bar{C}$ are defined in (\ref{eq:mat_bar}) and  $\bar{\Delta}_A=\bar{D}\Delta E_{cl}$. Therefore, applying the results from lemmas \ref{thm:H2_to_rank_uncert} and \ref{thm:Hi_to_rank_uncert} yields the desired result.
\end{proof}
\noindent The next corollary is now immediate.
\begin{corollary}
The optimization problem (\ref{eq:H2HiOSP}) can equivalently be cast as the following rank-constrained optimization problem
\begin{subequations}\label{eq:H2HiOSP_rank_cor}
\begin{alignat}{3}
&\underset{K,\varepsilon_{y},\varepsilon_{\mathcal{S}}}{\textrm{\emph{minimize}}}~~&& \varepsilon_{\mathcal{S}}+ \lambda_1 \varepsilon_y+\lambda_2 \|K\|_{0} \hspace{81pt}&\\
&\mbox{\textrm{\emph{subject to}}}~~&& K\in\mathcal{K}&\label{eq:H2HiOSP_rank_cor_first}\\
& &&X_r\succ 0,~~~ r=1,2,\\
& &&\varepsilon_r>0,~~~~ r=1,2,\\
& &&\mathbf{Tr}(\bar{C}X_1\bar{C}^{\text T})<\varepsilon_{\mathcal{S}}, \\
& &&\left[\begin{array}{c@{\hspace{.2em}}c}
 Q_1+\bar{B}\bar{B}^{\text T}+\varepsilon_1 {\rho} \bar{D}\bar{D}^{\text T} & *\\
 \sqrt{\rho} R_1 & -\varepsilon_1 I_j \end{array}\right] \prec 0, \\
& && \left[\begin{array}{c@{\hspace{.2em}}c@{\hspace{.2em}}c@{\hspace{.2em}}c}
  Q_2+\varepsilon_2 {\rho} \bar{D}\bar{D}^{\text T} &*&*&*\\
  \bar{B}^{\text T}& -\varepsilon_y I_{p}& *& *\\
  (\bar{C}X_2)&0&-\varepsilon_y I_{q}&*\\
  \sqrt{\rho}R_2 & 0 & 0 &-\varepsilon_2 I_j
  \end{array}\right] \prec 0,\label{eq:H2HiOSP_rank_cor_last}\\
& &&\mathbf{rank}(M_2)=2n, \label{eq:H2HiOSP_rank_cor_rank}
\end{alignat}
\end{subequations}
where
\begin{subequations}
\begin{alignat}{1}
Q_r&=X_rA_o^{\text T}+A_oX_r+Y_rB_K^{\text T}+B_K^{\text T}Y_r^{\text T},~~r=1,2\label{eq:H2HiOSP_rank_cor_par_first}\\
R_r&=E_{o}X_r+E_{B_1} Y_{r}^{\text T},~~~~~~~~~~~~~~~~~~~~~~~r=1,2\\
M_2&=\left[\begin{array}{cccc} 
X_1 & *&  * & *\\
Y_1^{\text T} & - &* &*\\
X_2 & Y_2& -& *\\
I_{2n} & (KC_K)^{\text T}&-&-
\end{array} \right],  \label{eq:M-2}\\
A_{o}&=\mathbf{diag}(A,A+B_1\hat{K}C)\in\mathbb{R}^{2n \times 2n},\\
\bar{D}&=\left[\begin{array}{cc} D^{\text T} & 0\end{array}\right]^{\text T}\in\mathbb{R}^{2n \times i},\\
E_{o}&=\left[\begin{array}{cc} E_A & 0\end{array}\right]\in\mathbb{R}^{j \times 2n},\\
C_K&=\left[\begin{array}{cc} C & 0\end{array}\right]\in\mathbb{R}^{{q} \times 2n},\\
B_K&=\left[\begin{array}{cc} B_1^{\text T} & 0\end{array}\right]^{\text T}\in\mathbb{R}^{2n \times m}.\label{eq:H2HiOSP_rank_cor_par_last}
\end{alignat}
\end{subequations}
\end{corollary}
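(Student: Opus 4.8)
The plan is to obtain the corollary directly from the preceding theorem by a single linearizing change of variables that extracts the gain $K$ from the closed-loop matrices $A_{cl}$ and $E_{cl}$ and relocates it into the rank constraint, thereby turning the matrix inequalities into genuine LMIs. Since the theorem already certifies that $(\ref{eq:H2HiOSP})$ is equivalent to $(\ref{eq:H2HiOSP_rank})$, it suffices to exhibit a feasibility- and cost-preserving correspondence between the feasible set of $(\ref{eq:H2HiOSP_rank})$ and that of $(\ref{eq:H2HiOSP_rank_cor})$.

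First I would record the two block identities $A_{cl}=A_o+B_KKC_K$ and $E_{cl}=E_o+E_{B_1}KC_K$. Both are immediate from the definitions $(\ref{eq:H2HiOSP_rank_cor_par_first})$--$(\ref{eq:H2HiOSP_rank_cor_par_last})$: the product $B_KKC_K$ deposits $B_1KC$ in the upper-left $n\times n$ block and zeros everywhere else, so $A_o+B_KKC_K=\mathbf{diag}(A+B_1KC,\,A+B_1\hat KC)=A_{cl}$, and analogously $E_o+E_{B_1}KC_K=[\,E_A+E_{B_1}KC\ \ 0\,]=E_{cl}$.

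Next I would introduce, for $r=1,2$, the new variable $Y_r:=X_r(KC_K)^{\text T}\in\mathbb{R}^{2n\times m}$ and substitute these two identities into the theorem's inequalities. Because $X_r\succ0$, every bilinear product $B_1KCX_r$ appearing in the $(1,1)$ blocks collapses to $Y_rB_K^{\text T}+B_KY_r^{\text T}$, so that $X_rA_{cl}^{\text T}+A_{cl}X_r$ becomes exactly $Q_r$ of $(\ref{eq:H2HiOSP_rank_cor_par_first})$, while the uncertainty block $E_{cl}X_r$ becomes $E_oX_r+E_{B_1}Y_r^{\text T}=R_r$. Term by term this converts the two inequalities of the theorem into $(\ref{eq:H2HiOSP_rank_cor_last})$; the trace bound, the positivity constraints, the cost, and $K\in\mathcal{K}$ are carried over unchanged. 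The payoff is that after the substitution $K$ no longer appears in any LMI, so $(\ref{eq:H2HiOSP_rank_cor_first})$--$(\ref{eq:H2HiOSP_rank_cor_last})$ are jointly convex in $(X_r,Y_r,\varepsilon_r,\varepsilon_y,\varepsilon_{\mathcal S})$.

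It remains to show that the defining relation $Y_r=X_r(KC_K)^{\text T}$ for $r=1,2$ under a common $K$ is equivalent to the single condition $\mathbf{rank}(M_2)=2n$ in $(\ref{eq:H2HiOSP_rank_cor_rank})$, which I would do by Lemma \ref{lem:non_to_rank}. The first block column of $M_2$ ends in $I_{2n}$, hence its $2n$ columns are independent, and requiring $\mathbf{rank}(M_2)=2n$ forces every other block column into their span. Reading the bottom block row pins the multiplier of the second block column to $(KC_K)^{\text T}$, whence the first block row yields $Y_1=X_1(KC_K)^{\text T}$ and the third yields $Y_2=X_2(KC_K)^{\text T}$; since only one copy of $(KC_K)^{\text T}$ is present, both relations share the same $K$. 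The entries marked by a dash are the free blocks of this factorization (namely $X_1^{-1}$, $X_1^{-1}X_2$, and $(KC_K)X_1(KC_K)^{\text T}$), all well-defined because $X_1\succ0$, and the cost does not depend on them. I expect this last step to be the main obstacle: one must confirm that inserting $X_2$ into the first block column while sharing a single $(KC_K)^{\text T}$ block encodes the two bilinear relations simultaneously without over-constraining them—in particular that the $\mathcal H_\infty$ relation $Y_2^{\text T}=KC_KX_2$ forced through the $X_2$ column is consistent with the factorization forced through the $X_1$ column. Everything else reduces to the block bookkeeping described above.
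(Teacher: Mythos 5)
Your proposal is correct and follows essentially the same route the paper intends: the paper states the corollary as ``immediate'' from the preceding theorem, and what it leaves implicit is exactly your argument---the identities $A_{cl}=A_o+B_KKC_K$, $E_{cl}=E_o+E_{B_1}KC_K$, the substitution $Y_r=X_r(KC_K)^{\text T}$ turning $P_r$, $E_{cl}X_r$ into $Q_r$, $R_r$, and Lemma \ref{lem:non_to_rank} showing that $\mathbf{rank}(M_2)=2n$ encodes both bilinear relations through the single shared block $(KC_K)^{\text T}$ (your consistency worry resolves exactly as you suspect, since $Y_1^{\text T}X_1^{-1}X_2=KC_KX_2=Y_2^{\text T}$ is automatic). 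The only nits: your list of free blocks omits the $(3,3)$ entry $X_2X_1^{-1}X_2$, and note the paper's $Q_r$ has a dimensional typo ($B_K^{\text T}Y_r^{\text T}$ should read $B_KY_r^{\text T}$), which your derivation implicitly corrects.
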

\vspace{-6pt}
\section{A Tractable Approximation Algorithm for Computing Sparse Feedback Controllers} \label{sec:algorithm} 
\vspace{-2pt}
Although both optimizations (\ref{eq:H2HiOSP_rank}) and (\ref{eq:H2HiOSP_rank_cor}) can be utilized to solve our controller sparsification problem, we choose to only implement the one formulated in (\ref{eq:H2HiOSP_rank_cor}). The terms in our optimization problem are all convex except the sparsity-promoting term in the cost function and the rank constraint. This section intends to shed light on our approach in dealing with these two non-convex and combinatorial terms. 

As for the sparsity-promoting term of the objective function, since the $\ell_0$ norm is an integer-valued function, utilizing it in our formulation introduces the complications of combinatorial optimization. In order to reduce the complexity of sparse vector/matrix recovery problems, we employ the $\ell_1$ norm and its weighted versions. This is because convex surrogates of the $\ell_0$ norm are among the most common functions used to measure the sparsity and have been utilized in diverse applications \cite{Lin:2013,Arastoo:2012}. Therefore, we have
\begin{alignat}{3}
&\underset{K,\varepsilon_{y},\varepsilon_{\mathcal{S}}}{\textrm{{minimize}}}~~&& \varepsilon_{\mathcal{S}}+ \lambda_1 \varepsilon_y+\lambda_2 \|W\circ K\|_{1}\hspace{63pt}& \label{eq:H2HiOSP_rank_cor_rel}\\
&\mbox{\textrm{{subject to}}}~~&&(\ref{eq:H2HiOSP_rank_cor_first})-(\ref{eq:H2HiOSP_rank_cor_rank}),\notag\\
& &&(\ref{eq:H2HiOSP_rank_cor_par_first})-(\ref{eq:H2HiOSP_rank_cor_par_last}),\notag
\end{alignat}
where the weight matrix $W=\left[w_{ij}\right]\in\mathbb{R}^{m \times q}$ is entry-wise positive and chosen according to the objectives of the problem.

The convex relaxation of the sparsity-promoting term in the cost function of (\ref{eq:H2HiOSP_rank_cor_rel}) leaves us with an optimization problem in which non-convexity only arises in the form of a rank constraint, i.e., $\mathbf{rank}(M_2)=2n$. It is known that presence of the rank constraint still causes our optimization problem to become NP-hard. Therefore, we propose a technique, which is built upon the method studied in \cite{Arastoo:2016ACC}, to solve the rank constraint optimization problem. In a nutshell, this method is based on substituting the rank constraint on the symmetric matrix $M_2$ with a positive semidefinite constraint while introducing extra convex constraints along with a bi-linear term to the cost function. 
\begin{theorem}
Let us consider the rank-constrained optimization problem (\ref{eq:H2HiOSP_rank_cor_rel}) and define the following auxiliary optimization problem 
\begin{align}
\underset{Y,K,\varepsilon_{y},\varepsilon_{\mathcal{S}}}{\textrm{\emph{minimize}}}~~& \varepsilon_{\mathcal{S}}+ \lambda_1 \varepsilon_y+\lambda_2 \|W\circ K\|_{1} + \nu \mathbf{Tr}(YM_2) \label{eq:H2HiOSP_cor_bilin}\\
 \mbox{\textrm{\emph{subject to}}}~~&(\ref{eq:H2HiOSP_rank_cor_first})-(\ref{eq:H2HiOSP_rank_cor_last}),\notag\\
&(\ref{eq:H2HiOSP_rank_cor_par_first})-(\ref{eq:H2HiOSP_rank_cor_par_last}),\notag\\
&0\preceq Y\preceq I_{6n+m},\notag\\
&\mathbf{Tr}(Y)=4n+m,\notag\\
&M_2 \succeq 0,\notag
\end{align}
in which $\lambda_1, \lambda_2, \nu >0$ and the entry-wise positive matrix $W$ are some given design parameters. If  problem (\ref{eq:H2HiOSP_rank_cor_rel}) is feasible, 
then there exists a constant $\eta > 0$ for which the optimal solution $M_2$ from solving \eqref{eq:H2HiOSP_cor_bilin} satisfies  
\begin{align*}
    \mathbf{rank}(M_2;\eta \nu^{-1})~\leq~ 2n,
\end{align*}
i.e., rank of $M_2$ is less than or equal to $2n$ with tolerance threshold $\eta \nu^{-1}$ according to  Definition \ref{rank-tol}.
\end{theorem}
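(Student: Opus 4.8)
The plan is to compare the optimal value of the bilinear surrogate \eqref{eq:H2HiOSP_cor_bilin} against that of the rank-constrained program \eqref{eq:H2HiOSP_rank_cor_rel}, and to interpret the penalty $\nu\,\mathbf{Tr}(YM_2)$ through a Ky Fan--type eigenvalue identity. Note first that $M_2$ is a symmetric matrix of size $(6n+m)\times(6n+m)$, so the requirement $\mathbf{rank}(M_2)=2n$ amounts to asking that the null space of $M_2$ have dimension $4n+m$, which is exactly the value prescribed by $\mathbf{Tr}(Y)=4n+m$. Since \eqref{eq:H2HiOSP_rank_cor_rel} is assumed feasible and its objective $\varepsilon_{\mathcal{S}}+\lambda_1\varepsilon_y+\lambda_2\|W\circ K\|_1$ is bounded below by $0$, its optimal value $p^\star$ is a finite nonnegative constant, independent of $\nu$. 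I keep $\nu>0$ fixed and aim to show that any minimizer of \eqref{eq:H2HiOSP_cor_bilin} yields an $M_2$ whose $4n+m$ smallest eigenvalues are each at most $p^\star/\nu$; choosing $\eta$ to be any fixed constant with $\eta>p^\star$ then delivers the assertion.

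The first key step is the observation that, over the feasible set $\{Y:0\preceq Y\preceq I_{6n+m},\ \mathbf{Tr}(Y)=4n+m\}$, the minimum of $\mathbf{Tr}(YM_2)$ for a fixed symmetric $M_2$ equals the sum of its $4n+m$ smallest eigenvalues, by Ky Fan's theorem for the sum of the smallest eigenvalues. In particular, at a joint minimizer of \eqref{eq:H2HiOSP_cor_bilin} the matrix $Y$ must attain this minimum, because the remaining constraints and cost terms do not involve $Y$ and any suboptimal $Y$ could be replaced to strictly decrease the objective.

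The second step upper-bounds the optimal value $q^\star(\nu)$ of \eqref{eq:H2HiOSP_cor_bilin} by $p^\star$. I would take an optimal point of \eqref{eq:H2HiOSP_rank_cor_rel}; by the factorization underlying Lemma \ref{lem:non_to_rank}, the associated $M_2$ satisfies $M_2\succeq0$ with exactly $2n$ positive eigenvalues, so $\ker M_2$ has dimension $4n+m$. Selecting $Y$ to be the orthogonal projector onto $\ker M_2$ gives $0\preceq Y\preceq I_{6n+m}$, $\mathbf{Tr}(Y)=4n+m$, and $\mathbf{Tr}(YM_2)=0$; together with the original variables this is a feasible point of \eqref{eq:H2HiOSP_cor_bilin} whose cost equals $p^\star$, whence $q^\star(\nu)\le p^\star$. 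Combining the two steps, at a minimizer of \eqref{eq:H2HiOSP_cor_bilin} the convex part of the cost is nonnegative, so $\nu$ times the sum of the $4n+m$ smallest eigenvalues of $M_2$ is at most $q^\star(\nu)\le p^\star$. Each of these eigenvalues is therefore at most $p^\star/\nu$; in particular the $(4n+m)$-th smallest eigenvalue lies strictly below $\eta\nu^{-1}$ whenever $\eta>p^\star$. Consequently at most $2n$ eigenvalues of $M_2$ can be greater than or equal to $\eta\nu^{-1}$, which by Definition \ref{rank-tol} is precisely $\mathbf{rank}(M_2;\eta\nu^{-1})\le 2n$.

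I expect the main obstacle to be the second step — establishing $q^\star(\nu)\le p^\star$ in a way that is uniform in $\nu$ — since it hinges on exhibiting a kernel projector $Y$ that annihilates $M_2$ at a rank-feasible point, which in turn relies on the positive semidefiniteness of $M_2$ guaranteed by the structure in Lemma \ref{lem:non_to_rank}. The supporting Ky Fan identity and the final $\le$-versus-$<$ bookkeeping in the tolerance threshold are comparatively routine, the latter being handled simply by taking $\eta$ strictly larger than $p^\star$ so that the bound becomes strict.
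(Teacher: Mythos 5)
Your proposal is correct and takes essentially the same route as the paper: both bound the optimal value of the auxiliary problem \eqref{eq:H2HiOSP_cor_bilin} by the rank-constrained optimum $p^\star$ (you by exhibiting the kernel-projector $Y$ at a rank-feasible optimal point, the paper via an intermediate penalized problem whose feasible set is contained in that of \eqref{eq:H2HiOSP_cor_bilin} --- the same construction in disguise), and both then invoke the Ky Fan/trace identity together with nonnegativity of the convex cost terms to bound the sum of the $4n+m$ smallest singular values of $M_2$ by $p^\star\nu^{-1}$. Your one refinement --- taking $\eta$ strictly greater than $p^\star$ rather than $\eta \geq J^*$ as in the paper --- is a slightly cleaner handling of the ``larger than or equal to'' boundary case in Definition \ref{rank-tol}.
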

\begin{proof}
Examining the  optimization problem
\begin{align}
\underset{Y,K,\varepsilon_{y},\varepsilon_{\mathcal{S}}}{\textrm{{minimize}}}~~& \varepsilon_{\mathcal{S}}+ \lambda_1 \varepsilon_y+\lambda_2 \|W\circ K\|_{1} + \nu \mathbf{Tr}(YM_2)\label{eq:H2HiOSP_cor_bilin_rank}\\
\mbox{\textrm{{subject to}}}~~&(\ref{eq:H2HiOSP_rank_cor_first})-(\ref{eq:H2HiOSP_rank_cor_last}),\notag\\
&(\ref{eq:H2HiOSP_rank_cor_par_first})-(\ref{eq:H2HiOSP_rank_cor_par_last}),\notag\\
&0\preceq Y\preceq I_{6n+m},\notag\\
&\mathbf{Tr}(Y)=4n+m, \notag
\end{align}
and considering the assumption of feasibility of (\ref{eq:H2HiOSP_rank_cor_rel}), it is straightforward to show that the optimal cost of this problem is equal to the optimal cost of (\ref{eq:H2HiOSP_rank_cor_rel}) for all values of $\nu$. This is due to the fact that the optimal value of $\mathbf{Tr}(YM_2)$, which can be shown to be the sum of the $4n+m$ smaller singular values of matrix $M_2$ \cite[p.266]{Dattoro:2005}, is always equal to zero, since $\mathbf{rank}(M_2)=2n$. 

Also, it is easy to establish that the feasible set of the optimization problem (\ref{eq:H2HiOSP_cor_bilin}) is a super-set of that of the (\ref{eq:H2HiOSP_cor_bilin_rank}). Hence, the optimal cost of (\ref{eq:H2HiOSP_cor_bilin}) is bounded above by the optimal cost of (\ref{eq:H2HiOSP_cor_bilin_rank}). Therefore, denoting the optimal cost of (\ref{eq:H2HiOSP_rank_cor_rel}) by $J^*$, we can write
\begin{align*}
J^*&\geq \varepsilon^*_{\mathcal{S}}(\nu)+ \lambda_1 \varepsilon^*_y(\nu)+\lambda_2 \|W\circ K^*(\nu)\|_{1} \\
&+ \nu \mathbf{Tr}(Y^*(\nu)M^*_2(\nu)),
\end{align*}
where $\varepsilon^*_{\mathcal{S}}(\nu)$, $\varepsilon^*_y(\nu)$, $K^*(\nu)$, $Y^*(\nu)$, and $M^*_2(\nu)$ are the optimal solutions to (\ref{eq:H2HiOSP_cor_bilin}) for that particular value of $\nu$. Since the first three terms on the right hand side of the above inequality are non-negative, we can deduce
\begin{align*}
    \mathbf{Tr}(Y^*(\nu)M^*_2(\nu))\leq J^* \nu^{-1}.
\end{align*}
Therefore, the sum of $4n+m$ smaller singular values of matrix $M_2$ is less or equal to $J^* \nu^{-1}$. Taking the constant $\eta$ greater than or equal to $J^*$, we have
\begin{align*}
    \mathbf{rank}(M_2;\eta \nu^{-1})\leq 2n.
\end{align*}
Hence, the proof is complete.
\end{proof}
%
\begin{figure}[!t]
\vspace{.1in}
\begin{center}
\begin{tabular}{| l |}
\hline
{\bf Algorithm 1:} Solution to problem (\ref{eq:H2HiOSP_cor_bilin}) \\
\hline 
\hline
{\bf Inputs:} $A$, $B_1$, $B_2$, $C$, $Q$, $R$, $\lambda_1$, $\lambda_2$, $\nu$, $\mathcal{K}$, $W$, $\rho$, and $\varepsilon^*$.\\
~1: {\em Initialization:} \\
~~~~{\small Set $Y^{(0)}=I_{6n+m}$, $\varepsilon^{(0)} > \varepsilon^*$, $K^{(0)}=0_{m \times q}$ and $k=0$.}\\
~2: {\bf While} $\varepsilon^{(k)} > \varepsilon^*$  \bf{do}\\
~3: \hspace{.1in} Update $Z^{(k+1)}$ by solving (\ref{eq:Z_min}),\\
~4: \hspace{.1in} Update $Y^{(k+1)}$ using the equation (\ref{eq:Y_min}),\\
~5: \hspace{.1in} Update $\varepsilon^{(k+1)}$ using the equation (\ref{eq:stop_crit}),\\
~6: \hspace{.1in} $k\leftarrow k+1$,\\
~7: {\bf end while}\\
~8: Truncate $K$.\\
{\bf Output:} $K$ \\
\hline
\end{tabular}
\normalsize
\end{center}
\vspace{-.25in}
\end{figure}
%
We should remind that according to (\ref{eq:M-2}) and the specific structure of matrix $M_2$ it is always true that $\mathbf{rank}(M_2) \geq 2n$. As a result of the previous theorem, we can now solve the optimization problem (\ref{eq:H2HiOSP_cor_bilin}) for an appropriately-chosen parameter $\nu$ to obtain a sub-optimal solution to the problem (\ref{eq:H2HiOSP_rank_cor_rel}). For the simplicity of our notations, the letter $Z$ is used to denote the stack of all optimization variables excluding variable $Y$. The optimization problem (\ref{eq:H2HiOSP_cor_bilin}) can be rewritten as follows.
\begin{alignat*}{3}
&\underset{Z,Y}{\textrm{{minimize}}}~~&& \mathcal{F}(Z,Y)\hspace{155pt} &\\
&\mbox{\textrm{{subject to}}}~~&& Z\in \mathcal{C}_z,~~Y\in \mathcal{C}_y,
\end{alignat*}
where $\mathcal{C}_z$ is the convex set defined by the constraints (\ref{eq:H2HiOSP_rank_cor_first})-(\ref{eq:H2HiOSP_rank_cor_last}), (\ref{eq:H2HiOSP_rank_cor_par_first})-(\ref{eq:H2HiOSP_rank_cor_par_last}), along with $M_2\succeq0$, and the convex set $\mathcal{C}_y$ is generated by $\mathbf{Tr}(Y)=4n+m$ and 
$0\preceq Y\preceq I_{6n+m}$. Needless to say that $\mathcal{F}(Z,Y)$ represents the bi-linear objective function in the minimization problem (\ref{eq:H2HiOSP_cor_bilin}). The above reformulation allows us to carry out this problem by iteratively optimizing the objective function for $Z$ and $Y$. As a result, the main steps of this iterative method can be divided into two sub-problems, $Z$-minimization and $Y$-minimization problems.  As both $Z$-minimization and $Y$-minimization steps are convex optimizations, they can be performed in a computationally efficient manner. However, for the $Y$-minimization, there also exists an analytic solution, stated in the next theorem.

\begin{theorem} \label{thm:Y_min_sol}
The optimal solution to the \emph{$Y$-minimization} step is given by
\begin{align}\label{eq:Y_update}
    Y^*=I_{6n+m}-\sum_{i=1}^{2n} u_i {u_i}^{\text T},
\end{align}
where vectors $u_i$ for $i=1,\ldots,2n$ are the singular vectors corresponding to the $2n$ larger singular values of $M_2$.
\end{theorem}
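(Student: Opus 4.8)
The plan is to observe that the \emph{$Y$-minimization} step decouples from the rest of the variables: when everything in $Z$ (in particular $K$, $X_1$, $X_2$, $Y_1$, $Y_2$, and hence $M_2$) is held fixed, the only term of the objective $\mathcal{F}(Z,Y)$ that depends on $Y$ is $\nu \mathbf{Tr}(YM_2)$. Since $\nu>0$, the step reduces exactly to
\begin{align*}
\underset{Y}{\textrm{minimize}}~~& \mathbf{Tr}(YM_2)\\
\mbox{\textrm{subject to}}~~& 0 \preceq Y \preceq I_{6n+m},~~\mathbf{Tr}(Y)=4n+m,
\end{align*}
with $M_2\succeq 0$ a fixed symmetric matrix. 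Because $M_2$ is symmetric positive semidefinite, its singular values and singular vectors coincide with its eigenvalues and eigenvectors, so I would work with the eigendecomposition $M_2=U\Lambda U^{\text T}$, where $U=[u_1~\cdots~u_{6n+m}]$ is orthogonal and $\Lambda=\mathbf{diag}(\lambda_1,\ldots,\lambda_{6n+m})$ with $\lambda_1\geq\cdots\geq\lambda_{6n+m}\geq 0$.

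The core of the argument is to diagonalize the problem via the congruence $\tilde{Y}=U^{\text T}YU$. First I would note that $Y\mapsto U^{\text T}YU$ is a bijection of the feasible set onto itself, since it preserves $\mathbf{Tr}(\cdot)$ and both semidefinite constraints $0\preceq\cdot\preceq I_{6n+m}$. Under it the objective becomes $\mathbf{Tr}(YM_2)=\mathbf{Tr}(\tilde{Y}\Lambda)=\sum_i \lambda_i \tilde{y}_{ii}$, so it depends only on the diagonal of $\tilde{Y}$. Next I would record the two facts constraining those diagonal entries: $0\preceq\tilde{Y}\preceq I_{6n+m}$ forces $\tilde{y}_{ii}=e_i^{\text T}\tilde{Y}e_i\in[0,1]$ for every $i$, while $\mathbf{Tr}(\tilde{Y})=4n+m$ forces $\sum_i \tilde{y}_{ii}=4n+m$. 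Hence the objective is bounded below by the optimal value of the linear program of minimizing $\sum_i \lambda_i t_i$ over $t_i\in[0,1]$ subject to $\sum_i t_i=4n+m=(6n+m)-2n$.

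Since the $\lambda_i$ are sorted nonincreasingly, this linear program is solved by placing all available mass on the $4n+m$ smallest eigenvalues, i.e.\ $t_i=0$ for $i=1,\ldots,2n$ and $t_i=1$ for $i=2n+1,\ldots,6n+m$, with optimal value $\sum_{i=2n+1}^{6n+m}\lambda_i$, which recovers the ``sum of the $4n+m$ smaller singular values'' already cited from \cite[p.266]{Dattoro:2005}. This lower bound is attained by the genuinely feasible matrix $\tilde{Y}^\star=\mathbf{diag}(0,\ldots,0,1,\ldots,1)$, whose spectrum lies in $\{0,1\}$ and whose trace is $4n+m$, so $0\preceq\tilde{Y}^\star\preceq I_{6n+m}$. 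Transforming back yields $Y^\star=U\tilde{Y}^\star U^{\text T}=\sum_{i=2n+1}^{6n+m}u_i u_i^{\text T}=I_{6n+m}-\sum_{i=1}^{2n}u_i u_i^{\text T}$, which is precisely (\ref{eq:Y_update}).

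The step I expect to carry the real content is the reduction to the diagonal linear program: justifying that only the diagonal of $\tilde{Y}$ enters the objective while $0\preceq\tilde{Y}\preceq I_{6n+m}$ confines each diagonal entry to $[0,1]$. This is the matrix analogue of Ky Fan's extremal characterization of the sum of the smallest eigenvalues, and it is what makes the projection-valued minimizer optimal. The only delicacy worth a remark is non-uniqueness: if $\lambda_{2n}=\lambda_{2n+1}$, the minimizer is not unique, and (\ref{eq:Y_update}) should be read as exhibiting one optimal $Y^\star$ for any admissible choice of the top-$2n$ eigenvectors, which is all the algorithm requires.
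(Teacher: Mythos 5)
Your proof is correct, and it takes a more self-contained route than the paper. The paper's own proof is essentially a citation: it invokes the extremal result from Dattorro (p.~266) that the minimum of $\mathbf{Tr}(YM_2)$ over the constraint set equals the sum of the $4n+m$ smaller singular values of $M_2$, and then declares ``the rest of the proof is straightforward.'' You instead prove that fact from scratch -- exploiting $M_2\succeq 0$ to identify singular values with eigenvalues, diagonalizing via the orthogonal congruence $\tilde{Y}=U^{\text T}YU$, and reducing to the scalar linear program over $t_i\in[0,1]$, $\sum_i t_i = 4n+m$ -- and you also carry out explicitly the ``straightforward rest,'' namely exhibiting the feasible projection-type matrix that attains the bound and transforming it back to $Y^\star = I_{6n+m}-\sum_{i=1}^{2n}u_iu_i^{\text T}$. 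Two things your version buys: first, self-containment (the Ky Fan-type argument is short enough that the citation is not really needed); second, precision about the constraint set -- the paper's proof sketch states the extremal fact ``subject to $0\preceq Y\preceq I_{6n+m}$ and $Y\succeq 0$,'' omitting the trace equality $\mathbf{Tr}(Y)=4n+m$, without which the claimed minimum would trivially be zero (take $Y=0$); your derivation makes clear that the trace constraint is exactly what forces the optimal value to be the sum of the $4n+m$ smallest eigenvalues. Your closing remark on non-uniqueness when $\lambda_{2n}=\lambda_{2n+1}$ is also a point the paper passes over silently and is worth keeping.
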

\begin{proof}
The optimal value of $\mathbf{Tr}(YM_2)$, subject to $0\preceq Y\preceq I_{6n+m}$ and $Y \succeq 0$, is the sum of the $4n+m$ smaller singular values of matrix $M_2$ \cite[p.266]{Dattoro:2005}. The rest of the proof is straightforward.
\end{proof}
\subsection{Summary of the Algorithm} We utilize the following sequence of iterations to obtain the minimizer of the constrained problem (\ref{eq:H2HiOSP_cor_bilin}). First, we solve the $Z$-minimization and $Y$-minimization subproblems
\begin{align}
Z^{(k+1)}&=\arg\underset{Z\in \mathcal{C}_z} {\textrm{minimize}}~~\mathcal{F}(Z,Y^{(k)}), \label{eq:Z_min}\\
Y^{(k+1)}&=I_{6n+m}-\sum_{i=1}^{2n} u_i^{(k+1)} {u_i^{(k+1)}}^{\text T} ,\label{eq:Y_min}
\end{align}
where $M_2^{(k+1)}=\sum_{i=1}^{6n+m} \sigma_i^{(k+1)} u_i^{(k+1)} {u_i^{(k+1)}}^{\text T}$ is the singular value decomposition of $M_2^{(k+1)}$. The stopping criterion is established by $\varepsilon^{(k+1)}\le \varepsilon^*$, where  $\varepsilon^*$ is the given desired precision, with the following update law
\begin{equation} \label{eq:stop_crit}
\varepsilon^{(k+1)}=\frac{\|K^{(k+1)}-K^{(k)}\|}{\|K^{(k+1)}\|}.
\end{equation}
In the last step of the algorithm, we truncate negligible entries of the resulting feedback gain $K$,  e.g., those smaller than $5 \times 10^{-5}$,. These small entries show very weak couplings between the nodes in the information structure of the controller. A summary of our proposed algorithm is described in Algorithm 1.
\begin{remark}\label{rem:mon-dec}
Let $\left\{(Z^{(k)},Y^{(k)})\right\}$ be the sequence generated by Algorithm 1. Then, $\left\{\mathcal{F}(Z^{(k)},Y^{(k)})\right\}$ is a monotonically decreasing sequence and, hence, convergent.
\end{remark}

%

\begin{remark}
The choice of the weight matrix $W$ plays an important role in the sparsity-promoting properties of our method. When a proper weight matrix is not accessible, the weighted $\ell_1$ norm technique can also be employed to enhance the sparse controller recovery. In this method, the weight assigned to each controller entry is updated inversely proportional to the value of the corresponding matrix entry recovered from the previous iteration, i.~e.
\begin{align}
&w_{ij}^{(k+1)}=1/(|k_{ij}^{(k)}|+\xi), \:\:\: \forall i,j, \label{eq:w_update}
\end{align}
where the constant $\xi>0$ which is chosen as a relatively small constant, is augmented to the denominator of the update law (\ref{eq:w_update}) to guarantee the stability of the algorithm, especially, when $k_{ij}^{(k)}$ turns out to be zero in the previous iteration \cite{Candes:2008}. It should be noted that, our simulation results are obtained by performing this update law only in the first few iterations to avoid adversely affecting the convergence of the algorithm.
\end{remark}
\begin{figure}[!t]
\centering
  {\includegraphics[trim = 25mm 33mm 25mm 0mm, clip,width=0.45\textwidth]{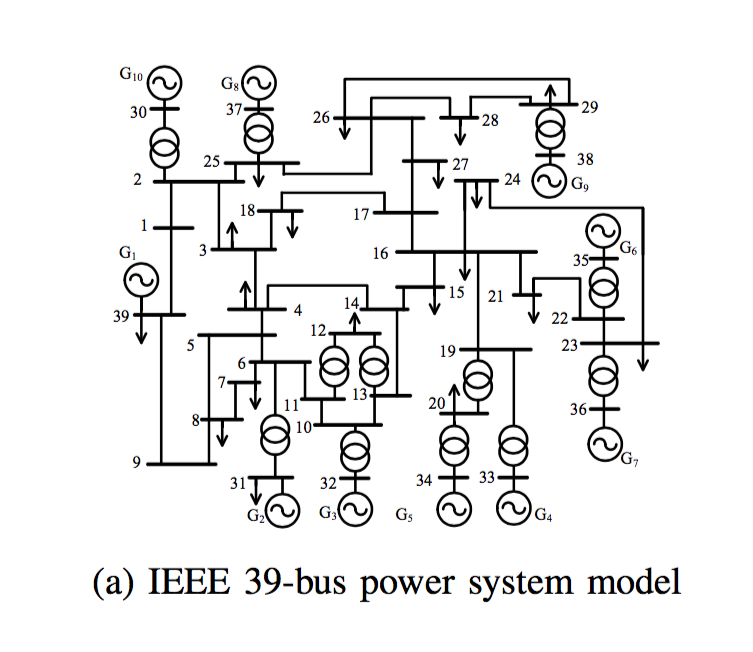}}
      \caption{IEEE 39-Bus Power System Model}
    \label{PowNet}
\end{figure}
\vspace{-6pt}
\section{Simulation Results}\label{sec:simul}
\vspace{-2pt}
In this section, we examine our proposed method by utilizing the IEEE $39$-Bus New England power system which consists of $N_G=10$ synchronous generators. Specifically, we take advantage of the state-space model provided by \cite{Kalbat:2014}, which is a linearized state-space model of swing equations is characterized by (\ref{eq:system}), where
\begin{align*}
x&= \left[\begin{array}{cc} \theta^{\text T} & \omega^{\text T} \end{array}\right]^{\text T},\\
A&=\left[\begin{array}{cc} 0 & I \\ -{\tilde{M}}^{-1}L & -{\tilde{M}}^{-1}\tilde{D} \end{array}\right],\\
B_1&=\left[\begin{array}{c} 0 \\ {\tilde{M}}^{-1} \end{array}\right] ,\\
B_2&=B_1\\
\tilde{M}& = \mathbf{diag}(\tilde{M}_1,\cdots,\tilde{M}_{N_G}),\\
\tilde{D}&= \mathbf{diag}(\tilde{D}_1,\cdots,\tilde{D}_{N_G}),\\
u&=Kx,\\
K&=\left[\begin{array}{cc}K_{\theta} & K_{\omega} \end{array}\right],\\
    \omega&=\dot{\theta}.
\end{align*}
The Laplacian or admittance matrix $L$ satisfies the following equations.
\begin{align*}
l_{ij}&= -b_{ij}^{\text{\it Kron}},\\
l_{ii}&=\sum_{k=1,k \neq i}^{N_G} b_{ik}^{\text{\it Kron}},
\end{align*}
where $B^{\text{\it Kron}}$ is the susceptance matrix of the corresponding Kron reduced admittance matrix. 

The power network utilized in our simulation is depicted in Figure \ref{PowNet}, and its parameters, in per unit system, are presented in Table \ref{table:1}. 

\begin{table}[!b]
\small
\centering
\begin{tabular}{|c| c| c |c|c|c|} 
 \hline
Bus & Generator & $\tilde{M}_i$ & $\tilde{D}_i$ & $\theta_0$ & $\omega_0$ \\ 
\hline
$30$ & $G_{10}$ & $4$ & $5$ & $-0.0839$ & $1$\\ 
\hline
$31$ & $G_{2}$ & $3$ & $4$ & $0.0000$ & $1$\\
 \hline
 $32$ & $G_{3}$ & $2.5$ & $4$ & $0.0325$ & $1$\\
 \hline
 $33$ & $G_{4}$ & $4$ & $6$ & $0.0451$ & $1$\\
 \hline
 $34$ & $G_{5}$ & $2$ & $3.5$ & $0.0194$ & $1$\\
 \hline
 $35$ & $G_{6}$ & $3.5$ & $3$ & $-0.0073$ & $1$\\
 \hline
 $36$ & $G_{7}$ & $3$ & $7.5$ & $0.1304$ & $1$\\
 \hline
 $37$ & $G_{8}$ & $2.5$ & $4$ & $0.0211$ & $1$\\
 \hline
 $38$ & $G_{9}$ & $2$ & $6.5$ & $0.1270$ & $1$\\
 \hline
 $39$ & $G_{1}$ & $6$ & $5$ & $-0.2074$ & $1$\\
 \hline
\end{tabular}
\vspace{.25in}
\caption{Power parameters used in our simulations.}
\label{table:1}
\normalsize
\end{table}

\begin{figure}[t]
\centering
\vspace{-.2cm}
  \hspace{10mm}  \subfloat[\label{fig:rho0}]{%
      \includegraphics[trim = 20mm 25mm 10mm 28mm, clip,width=0.4\textwidth]{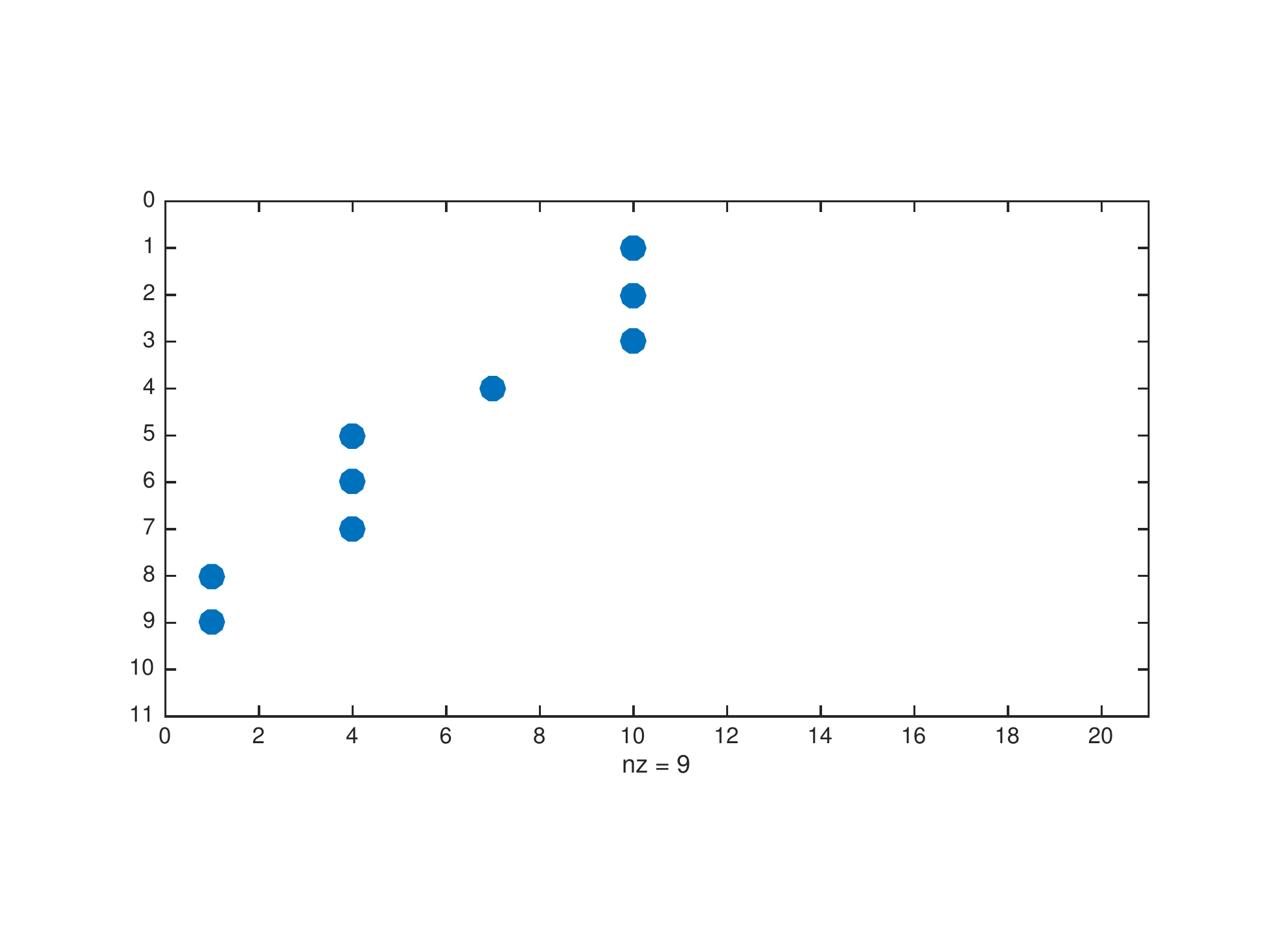}
    }
        \hfill
    \subfloat[\label{fig:rhoN0}]{%
      \includegraphics[trim = 20mm 25mm 10mm 28mm, clip, width=0.4\textwidth]{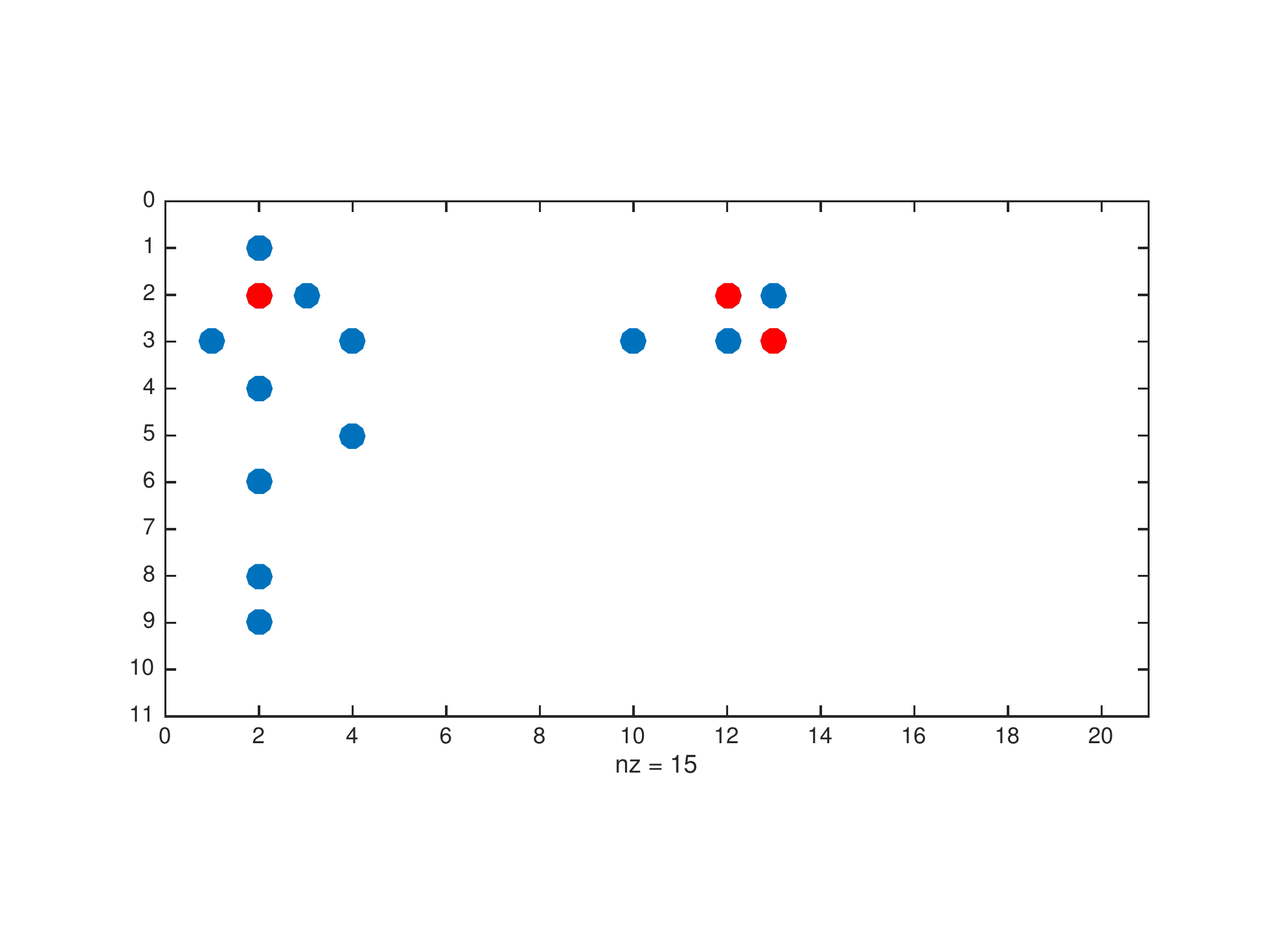}
      }    \hspace{10mm}
      
   \hspace{11mm}   \subfloat[\label{fig:rhoG0}]{%
      \includegraphics[trim = 39mm 15mm 28mm 10mm, clip, width=0.385\textwidth]{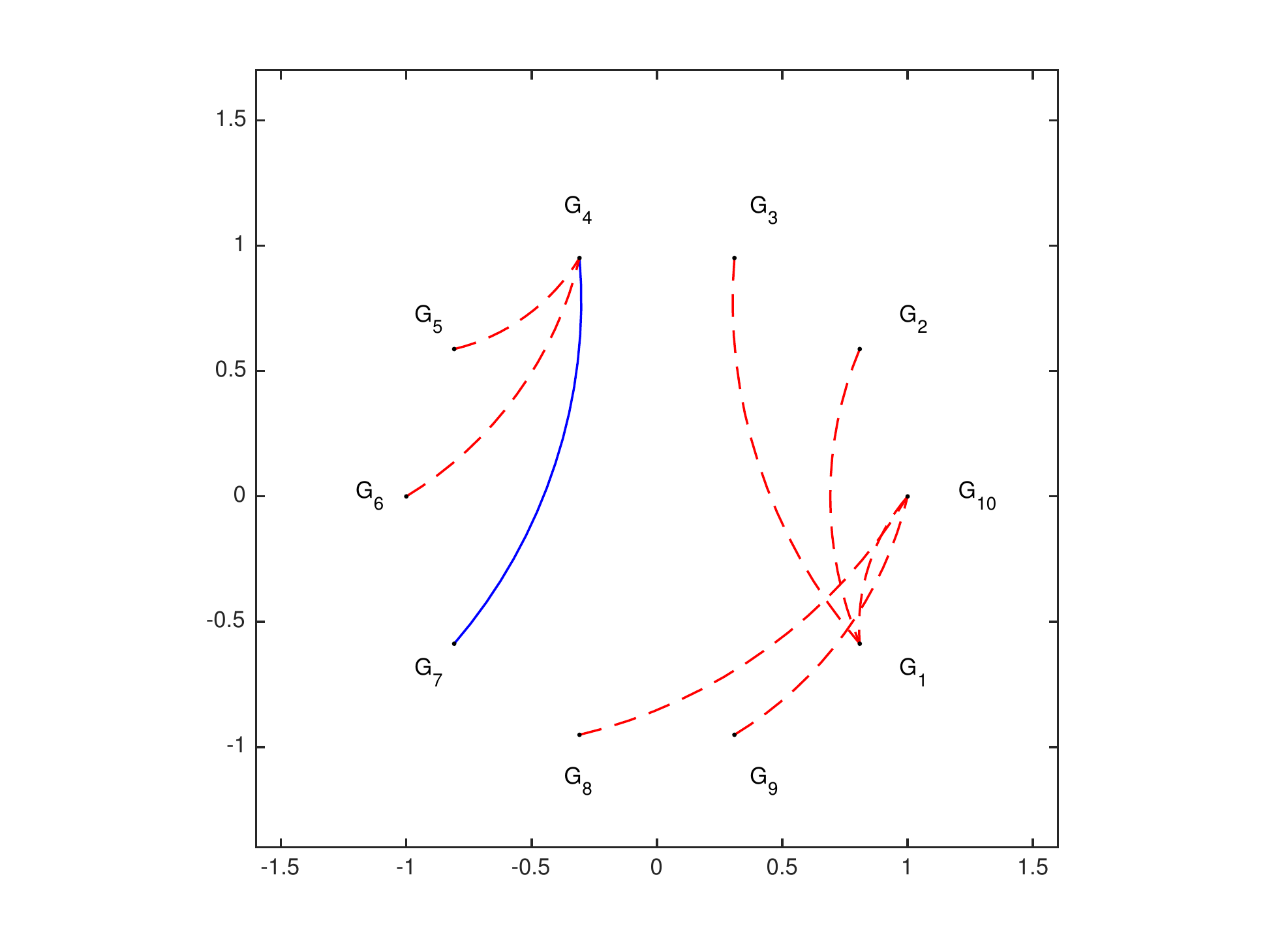}
      }
    \hfill
 \subfloat[\label{fig:rhoGN0}]{%
      \includegraphics[trim = 39mm 15mm 28mm 10mm, clip, width=0.385\textwidth]{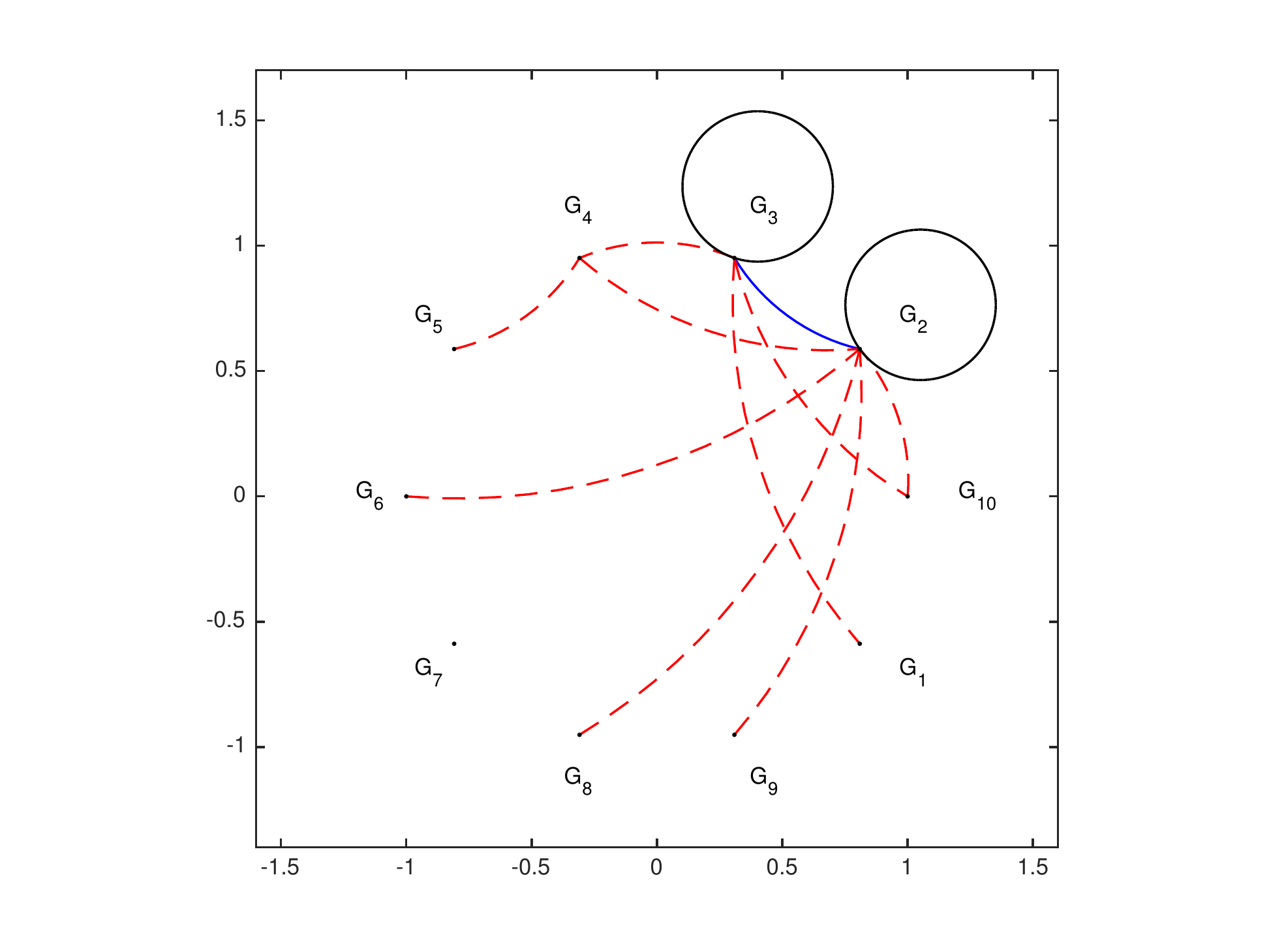}
      }   \hspace{11mm}
      \caption{ (a) Sparsity Pattern of $K$ for $\rho_{\text{\it rel}}=0 \%$; Blue and red bullets are used to depict diagonal and off-diagonal entries of $K$, respectively (b) Sparsity Pattern of $K$ for $\rho_{\text{\it rel}}= 30 \%$ \big($(i_1,i_2)=(2,3)$\big) (c) Sparsity Graph of $|K_{\theta}|+|K_{\omega}|$ for $\rho_{\text{\it rel}}=0 \%$; Blue solid lines, red dashed lines, and black self-loops are used to depict doubly-connected, singly-connected, and self-connected edges of $|K_{\theta}|+|K_{\omega}|$, respectively (d) Sparsity Graph of $|K_{\theta}|+|K_{\omega}|$ for $\rho_{\text{\it rel}}= 30 \%$ \big($(i_1,i_2)=(2,3)$\big) .}
    \label{fig:vs_lambda}
\end{figure}
We define the following performance metrics which quantify the deviation in $\mathcal{H}_2$ and $\mathcal{H}_\infty$ norms casued by the sparsification process. They also allow for comparison of the sparsification performance in the absence and presence of uncertainty on the system matrices.
\begin{align}
\mathcal{R}_2  =\frac{\|\mathcal{S}-\hat{\mathcal{S}}\|_{\mathcal{H}_2}}{\|\hat{\mathcal{S}}\|_{\mathcal{H}_2}},\\
\mathcal{R}_{\infty} =\frac{\|\mathcal{S}-\hat{\mathcal{S}}\|_{\mathcal{H}_{\infty}}}{\|\hat{\mathcal{S}}\|_{\mathcal{H}_{\infty}}}.
\end{align}
Now, we assume that the susceptance corresponding to the link between two randomly-chosen nodes $i_1$ and $i_2$ is affected by an uncertainty of the form
\begin{equation*}
    \rho = \rho_{\text{\it rel}} (b_{i_1i_2}^{\text{\it Kron}}),
\end{equation*}
where $\rho_{\text{\it rel}}$ is called the relative uncertainty and  $b_{ij}^{\text{\it Kron}}$ is assumed to take non-zero values. Should we relax this assumption, the uncertainty and relative uncertainty will have to be defined in a different way, e.g. 
\begin{align*}
\rho = \rho_{\text{\it rel}} \min \bigg\{\sum_{k=1,k \neq i_1}^{N_G} b_{i_1k}^{\text{\it Kron}},\sum_{k=1,k \neq i_2}^{N_G} b_{i_2k}^{\text{\it Kron}} \bigg\}.
\end{align*}
To study the effect of adding uncertainty to the link between generators $i_1$ and $i_2$, the matrices $D$, $E_A$, and $E_{B_1}$ are chosen as follows.
\begin{align*}
&D = -\begin{bmatrix} 0 & 0 \\ 0 & {\tilde{M}}^{-1} \end{bmatrix} (e_{i_1+N_G} -e_{i_2+N_G}),\\
&E_A =e_{i_1+N_G}^{\text T} -e_{i_2+N_G}^{\text T},\\
&E_{B_1}=0.
\end{align*}
Assuming $C=I$, $Q=I$, $R=10I$, $\lambda_1=0.5$, $\lambda_2=0.1$, $\nu=100$, $\xi=10^{-6}$, and $\varepsilon^*=10^{-2}$, we randomly choose two generators, $i_1=2$ and $i_2=3$, and consider the uncertainty cases $\rho_{\text{\it rel}} \in \{0 \%, 30 \%\}$. The results of the static state feedback controller design using our method are presented in Table \ref{table:2}. According to this table, an increase in uncertainty increases $\mathcal{R}_2$ and $\mathcal{R}_\infty$, and worsens the sparsification of the controller. 
\begin{table}[!b]
\small
\centering
\begin{tabular}{|c|c|c|c|} 
 \hline
$\rho_{\text{\it rel}}$ & $\mathcal{R}_2$ & $\mathcal{R}_{\infty}$ & $\|K\|_0/\|\hat{K}\|_0$  \\ 
 \hline
$0 \%$ & $21.35 \%$ & $49.42 \%$ & $4.5 \%$ \\ 
 \hline
$30 \%$ & $36.31 \%$ & $88.71 \%$ & $7.5 \%$ \\
 \hline
\end{tabular}
\caption {Performance and cardinality quantities for the case $\rho_{\text{\it rel}} \in \{0 \%, 30 \%\}$.}
\normalsize
\label{table:2}
\end{table}

Figures \ref{fig:rho0} and \ref{fig:rhoN0} visualize the corresponding sparsity patterns for both cases $\rho_{\text{\it rel}}= 0 \%$ and $\rho_{\text{\it rel}}= 30 \%$, respectively, and Figures \ref{fig:rhoG0} and \ref{fig:rhoGN0} visualize the corresponding sparsity graphs for both cases $\rho_{\text{\it rel}}= 0 \%$ and $\rho_{\text{\it rel}}= 30 \%$, respectively. It should be noted that entries $k_{22}$, $k_{23}$, $k_{2(12)}$, $k_{2(13)}$, $k_{3(12)}$, and $k_{3(13)}$ take non-zero values after applying the $30 \%$ relative uncertainty. The interpretation is that, since uncertainty causes interference to the link between two randomly-chosen generators, the construction of communication links between such generators is vital.

\begin{figure}[!t]
\centering
  {\includegraphics[trim = 38mm 10mm 28mm 10mm, clip,width=0.4\textwidth]{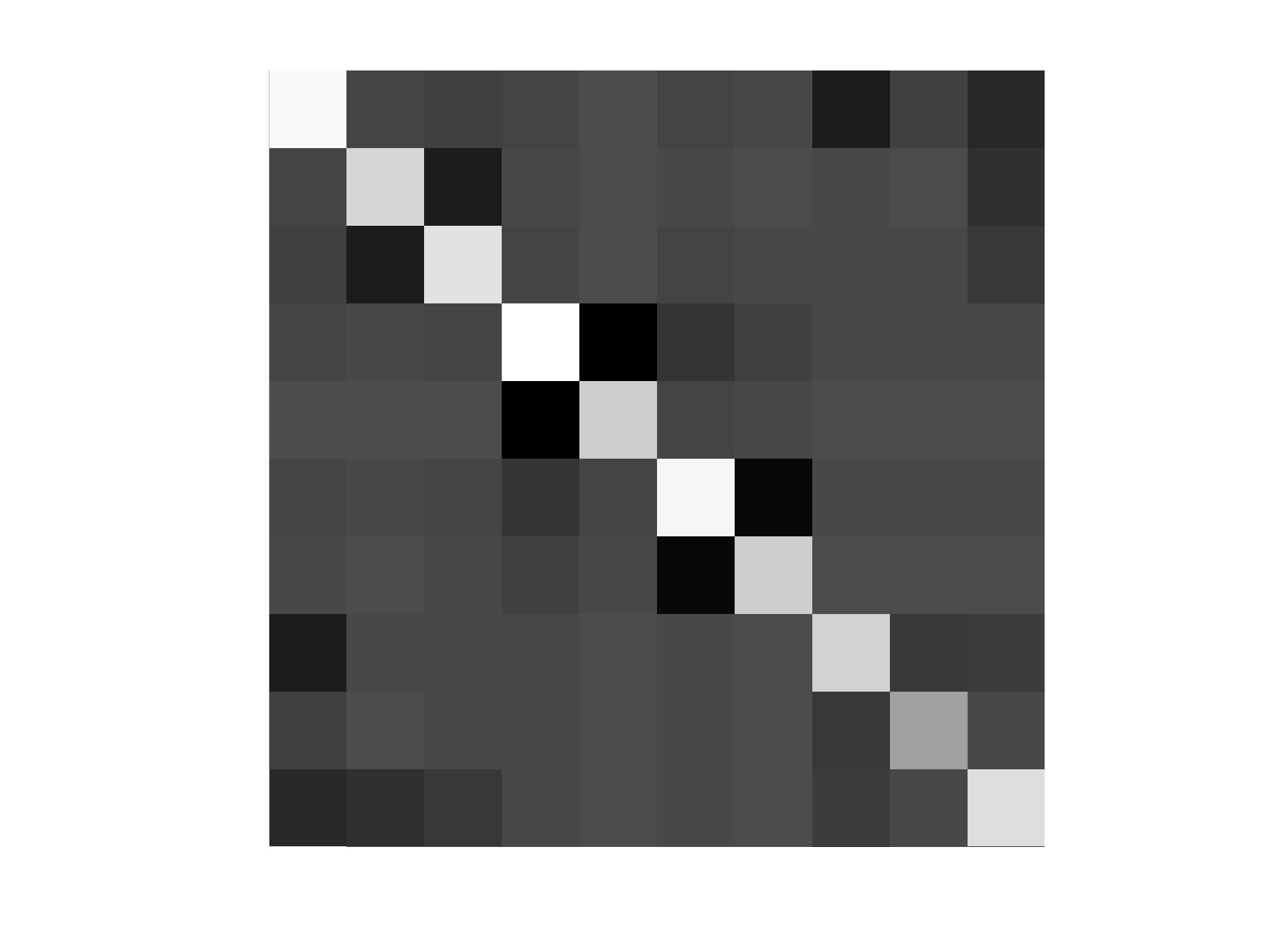}}
      \caption{Gray scale pattern of susceptance of all links of power network, i.e., $L$}
    \label{Generatorpat}
\end{figure}
Furthermore, additional plots are presented in Figure \ref{fig:freq_res} to show the similarity of the frequency behavior of the sparsely-controlled system to that of the $\text{LQR}$-controlled system. The upper left sub-figure, i.e., Figure \ref{fig:Sing_Range_rho0}, depicts the largest and smallest singular values of $\mathcal{S}$ and $\hat{\mathcal{S}}$ for the case of $\rho_{\text{\it rel}}= 0 \%$. It can be seen that the smallest singular values of the systems match for almost the whole frequency range and largest singular values achieve the same values for higher frequencies. Similar plots for the case of uncertain system with $\rho_{\text{\it rel}}= 30 \%$ are depicted in Figure \ref{fig:Sing_Range_rho5}. The plots depict that the deviation of the maximum singular value, caused by increasing the magnitude of the uncertainties, is much larger compared to the deviation of the minimum singular value. Also, the plots of Schatten 2-norm of the systems $\mathcal{S}$ and $\hat{\mathcal{S}}$, are depicted in lower sub-figures of \ref{fig:freq_res} for both cases, i.e., $\rho_{\text{\it rel}}= 0 \%$ and $\rho_{\text{\it rel}}= 30 \%$.  It is noteworthy that in neither of the cases, does the sparsification process seem to affect the higher frequency content of the closed-loop systems. This is desirable, since the controller sparsification will not be amplifying the harmonics in power grids, which are the main cause of power quality degradation.

In order to verify the relationship between the magnitude of the susceptance of each link, visualized in Figure \ref{Generatorpat}, and the density level of the corresponding entries in the controller design, we consider all cases with the $\rho_{\text{\it rel}}= 30 \%$ uncertainty on one link at a time, which results in $45$ cases. We then, compute ${f(K_{\theta})}$ and ${f(K_{\omega})}$, the sub-blocks of the controller matrix $K$, in which the matrix-valued function $f(X)=[f(X)_{ij}]$ is defined as
\begin{align*}
f(X)_{ij}=\left\{\begin{array}{lr}
 \|x_{ii}\|_0 + \|x_{ij}\|_0 + \|x_{ji}\|_0 + \|x_{jj}\|_0 &  \text{if~} i \neq j ,\\
 0 & \text{otherwise.}\end{array}\right.
\end{align*}
${f(K_{\theta})}$ and ${f(K_{\omega})}$ are used to visualize the number of controller links, necessary to be added to the generators connected with the uncertain link. Figures \ref{UncertaintyCardinalitya} and \ref{UncertaintyCardinalityb} show this visualization.

As depicted in Figs. \ref{UncertaintyCardinalitya} and \ref{UncertaintyCardinalityb}, in the case of links with higher susceptance, more communication links in controller design need to be established. This can be interpreted as the effective uncertainty of each link being proportional to the susceptance of that link. Therefore, an increase in susceptance of a link magnifies the uncertainty of that link, which results in establishment of more links in the designed controller to compensate for the fragility of the network on that link. This leads to similar patterns in Figures \ref{Generatorpat}, \ref{UncertaintyCardinalitya}, and \ref{UncertaintyCardinalityb}.


\begin{figure}[t]
\centering
\vspace{-.2cm}
    \subfloat[\label{UncertaintyCardinalitya}]{%
      \includegraphics[trim = 38mm 10mm 28mm 10mm, clip,width=0.4\textwidth]{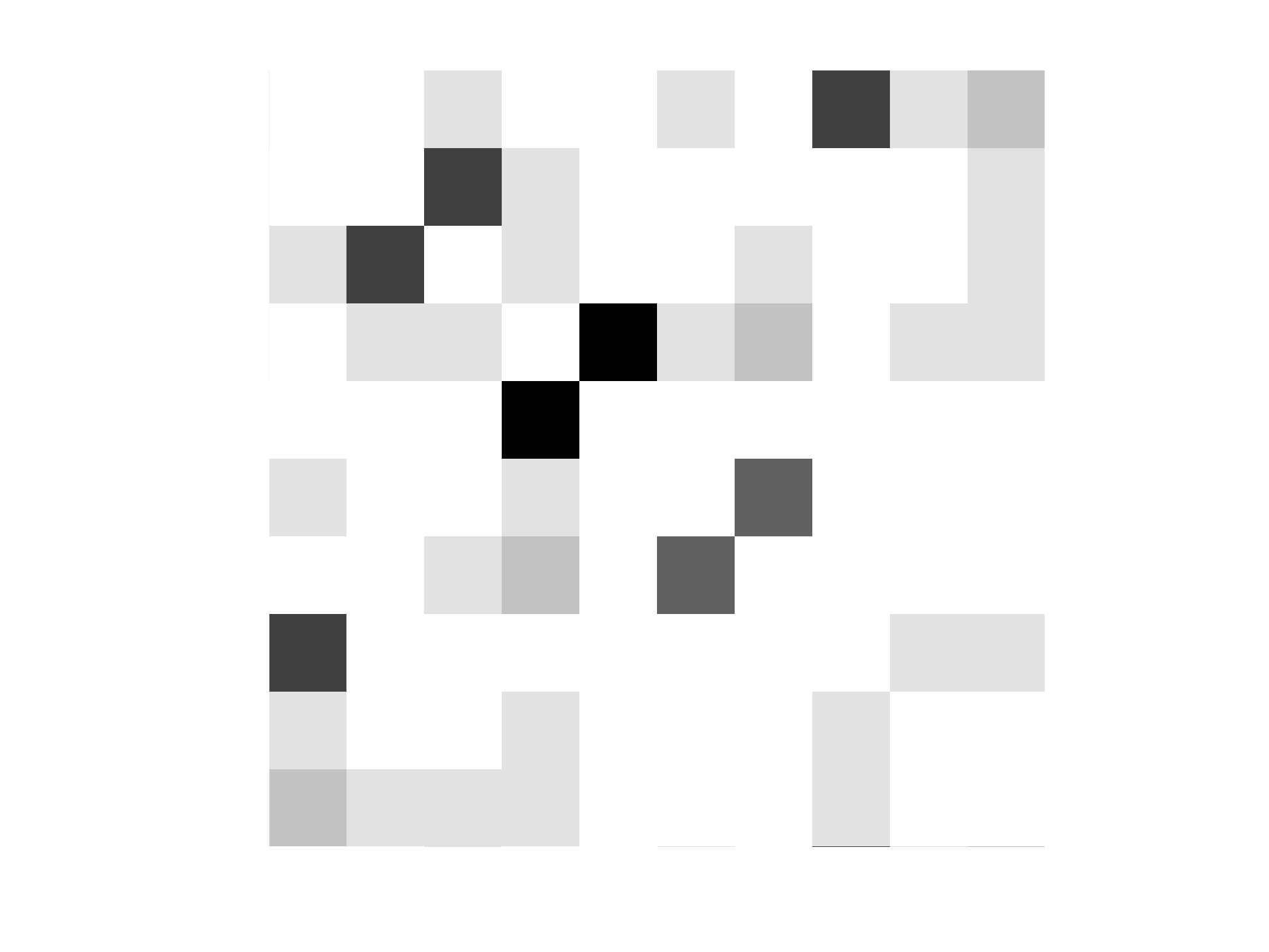}
    }
    \subfloat[\label{UncertaintyCardinalityb}]{%
      \includegraphics[trim = 38mm 10mm 28mm 10mm, clip, width=0.4\textwidth]{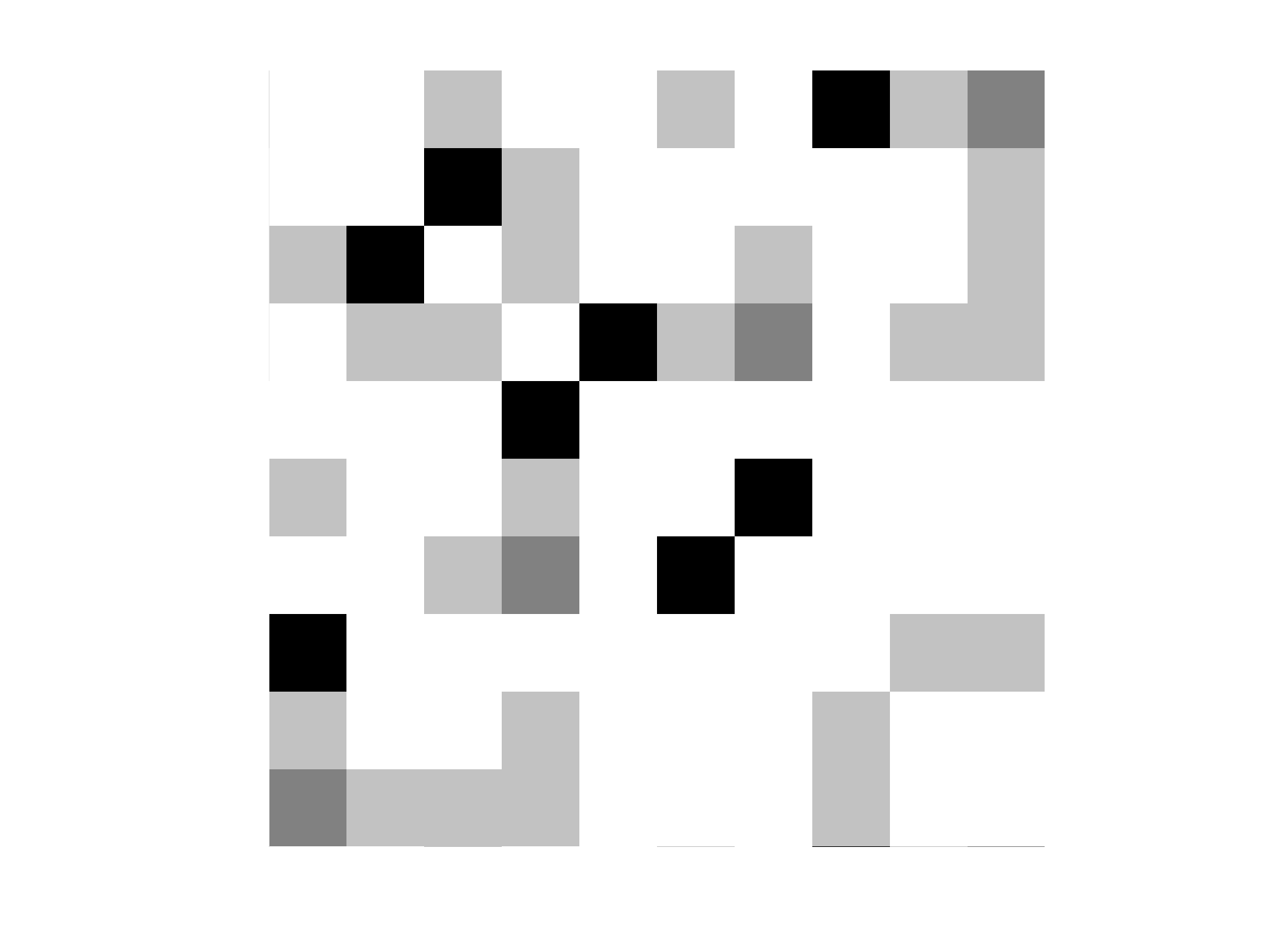}
      }
      \caption{ (a) Gray scale Pattern of $f(K_{\theta})+f(K_{\omega})$ for $\rho_{\text{\it rel}}=30 \%$ (b) Gray scale Pattern of $f(|K_{\theta}|+|K_{\omega}|)$ for $\rho_{\text{\it rel}}= 30 \%$}
    \label{UncertaintyCardinality}
\end{figure}



We furthermore showcase the effect of increasing the relative uncertainty of the network links on the cardinality of their corresponding controller entries for two randomly chosen links, connecting generator $4$ to generator $5$ and generators $2$ to $3$. As seen in figures $\ref{UncerCarda}$ and $\ref{UncerCardb}$, the increase of relative uncertainty, leads to construction of more communication links between two corresponding generators in the designed controller gain.

\begin{figure}[t]
\centering
\vspace{-.2cm}
    \subfloat[\label{UncerCarda}]{%
      \includegraphics[trim = 5mm 5mm 5mm 5mm, clip,width=0.45\textwidth]{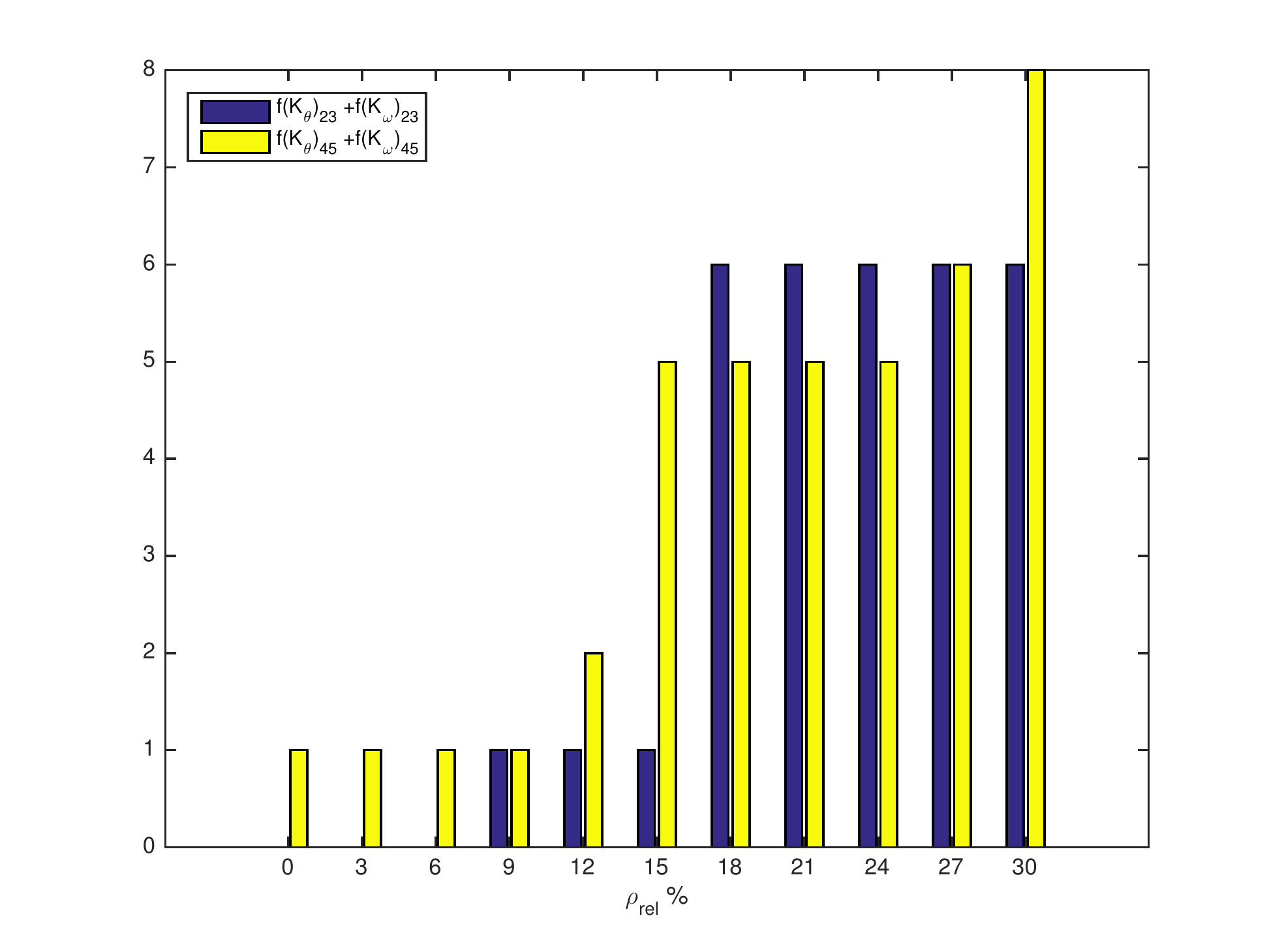}
    }
    \subfloat[\label{UncerCardb}]{%
      \includegraphics[trim = 5mm 5mm 5mm 5mm, clip, width=0.45\textwidth]{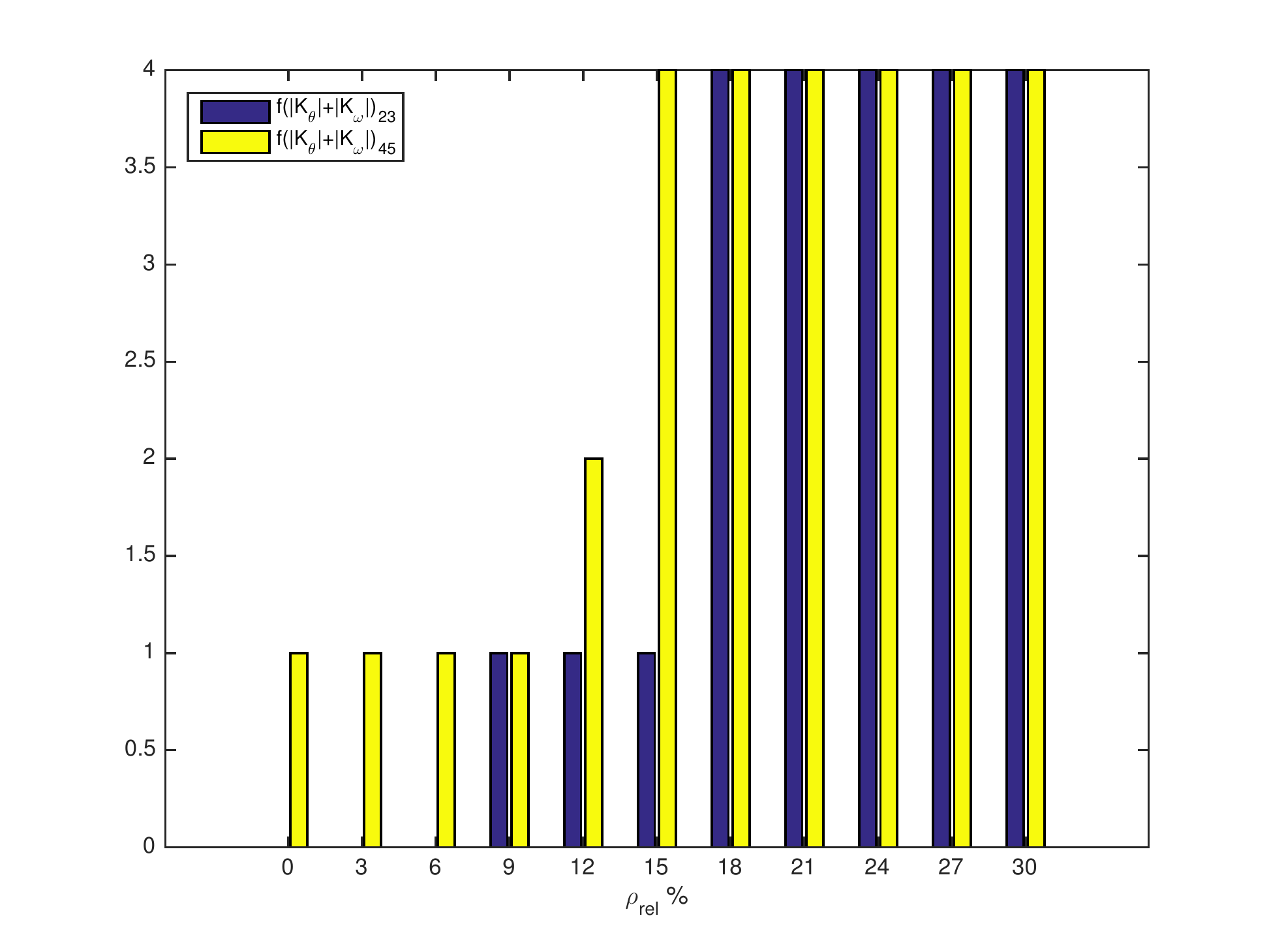}
      }
      \caption{ (a) $f(K_{\theta})+f(K_{\omega})$ vs $\rho_{\text{\it rel}} \%$ (b) $f(|K_{\theta}|+|K_{\omega}|)$ vs $\rho_{\text{\it rel}} \%$.}
    \label{UncerCard}
\end{figure}

\vspace{-6pt}
\section{Discussion}\label{sec:conc}
\vspace{-2pt}
We have explored a new optimization framework for the design of optimal sparse controllers for LTI systems under parametric uncertainties. The idea entails pruning the links of a centralized controller towards a sparse controller, while heeding the performance deterioration caused by this sparsification process. The design procedure is built upon constructing an optimization problem which seeks a sparse structured controller capable of exhibiting similar time and frequency characteristics of the previously designed controller, in the sense of $\mathcal{H}_2$ and $\mathcal{H}_{\infty}$ norms. We then, propose a computationally tractable algorithm, utilizing a bi-linear rank penalizing technique, to sub-optimally solve a fixed-rank reformulation of the aforementioned optimization problem. It should be noted one of the two steps in the bi-linear optimization algorithm, namely \emph{$Y$-minimization} step, has an analytical solution, which immensely enhances the run-time of our algorithm. Also, the outstanding performance of our proposed algorithm has been demonstrated through our extensive numerical simulations, run against various types of networks and systems. Although proving the global convergence of this algorithm still remains an open problem, we have shown that the sequence generated by the optimal cost at each iteration is monotonically decreasing and as a result, convergent.

One future research direction would encompass modifying our method to study the effect of the structure and magnitude of the uncertainties on the robustness of the closed-loop systems as well as the sparsity level of the controllers. An important application of this study would be the analysis of the robustness of networks, such as power grids, against possible attacks on the critical nodes.

\begin{figure}[t]
\centering
  \vspace{-.1cm} 
    \subfloat[\label{fig:Sing_Range_rho0}]{%
      \includegraphics[trim = 0mm 5mm 0mm 10mm, clip,width=0.45\textwidth]{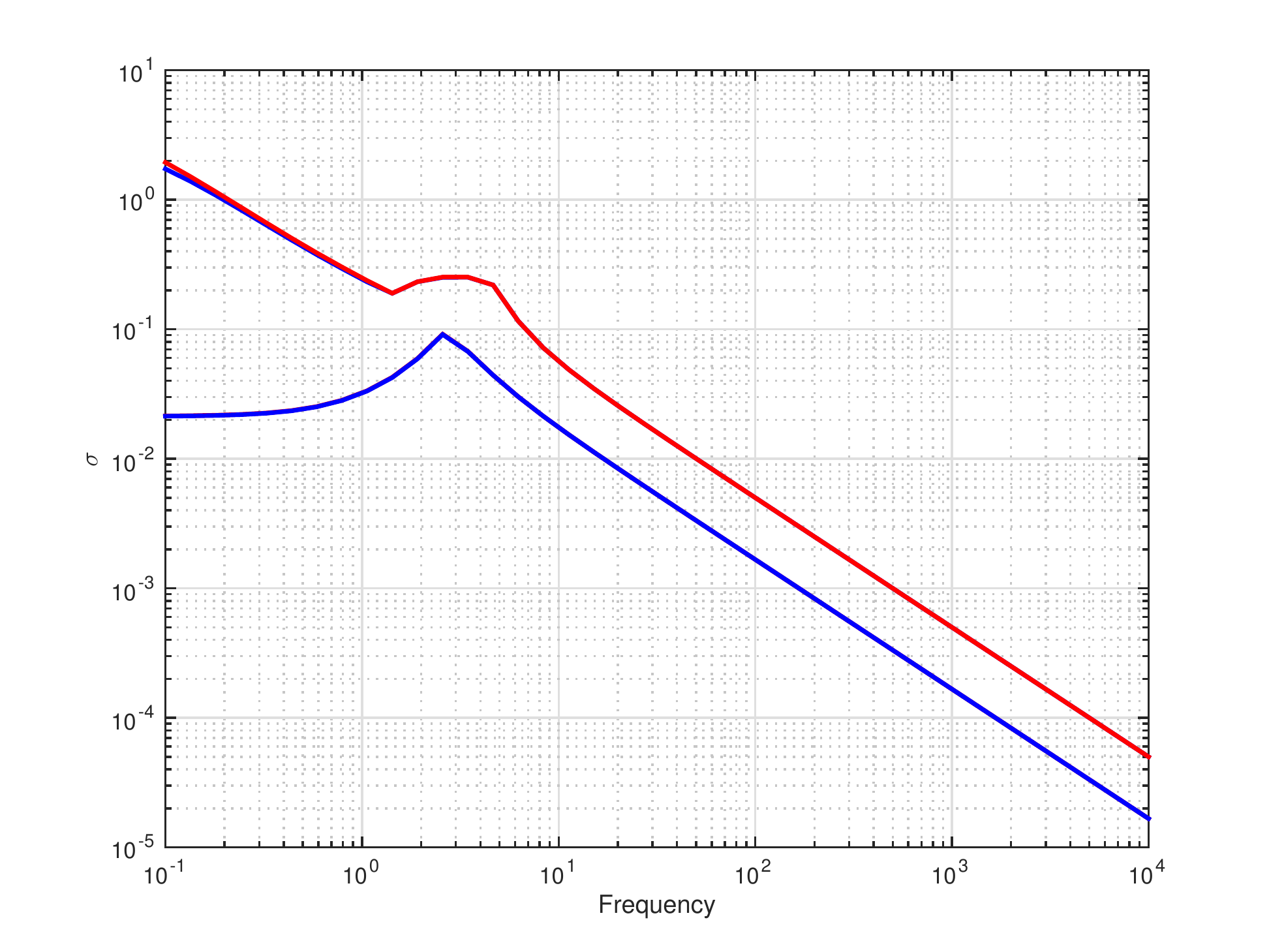}
    }
    \subfloat[\label{fig:Sing_Range_rho5}]{%
      \includegraphics[trim = 0mm 5mm 0mm 10mm, clip,width=0.45\textwidth]{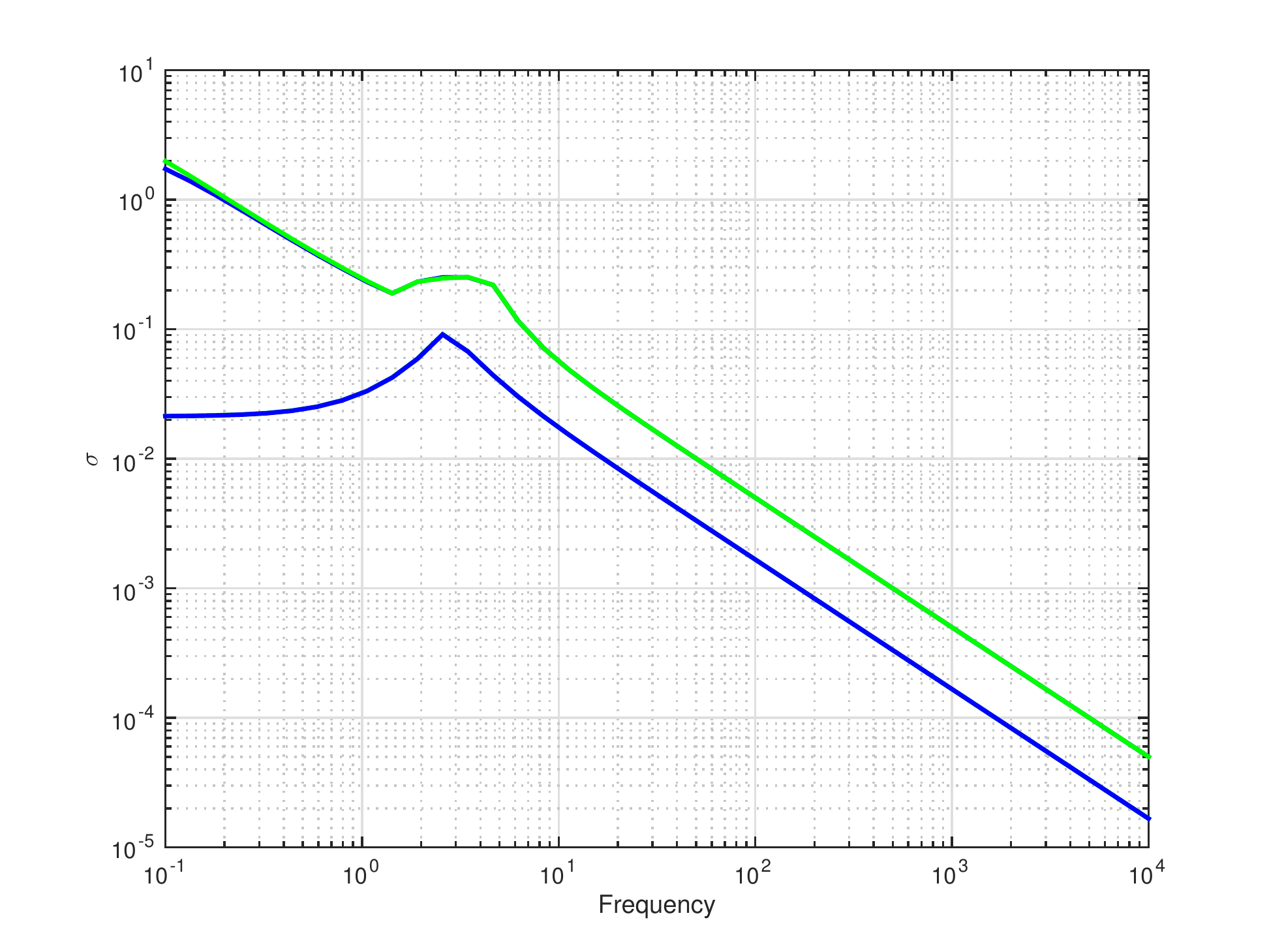}
    }\\
    \subfloat[\label{fig:schat_rho0}]{%
      \includegraphics[trim = 0mm 5mm 0mm 10mm, clip,width=0.45\textwidth]{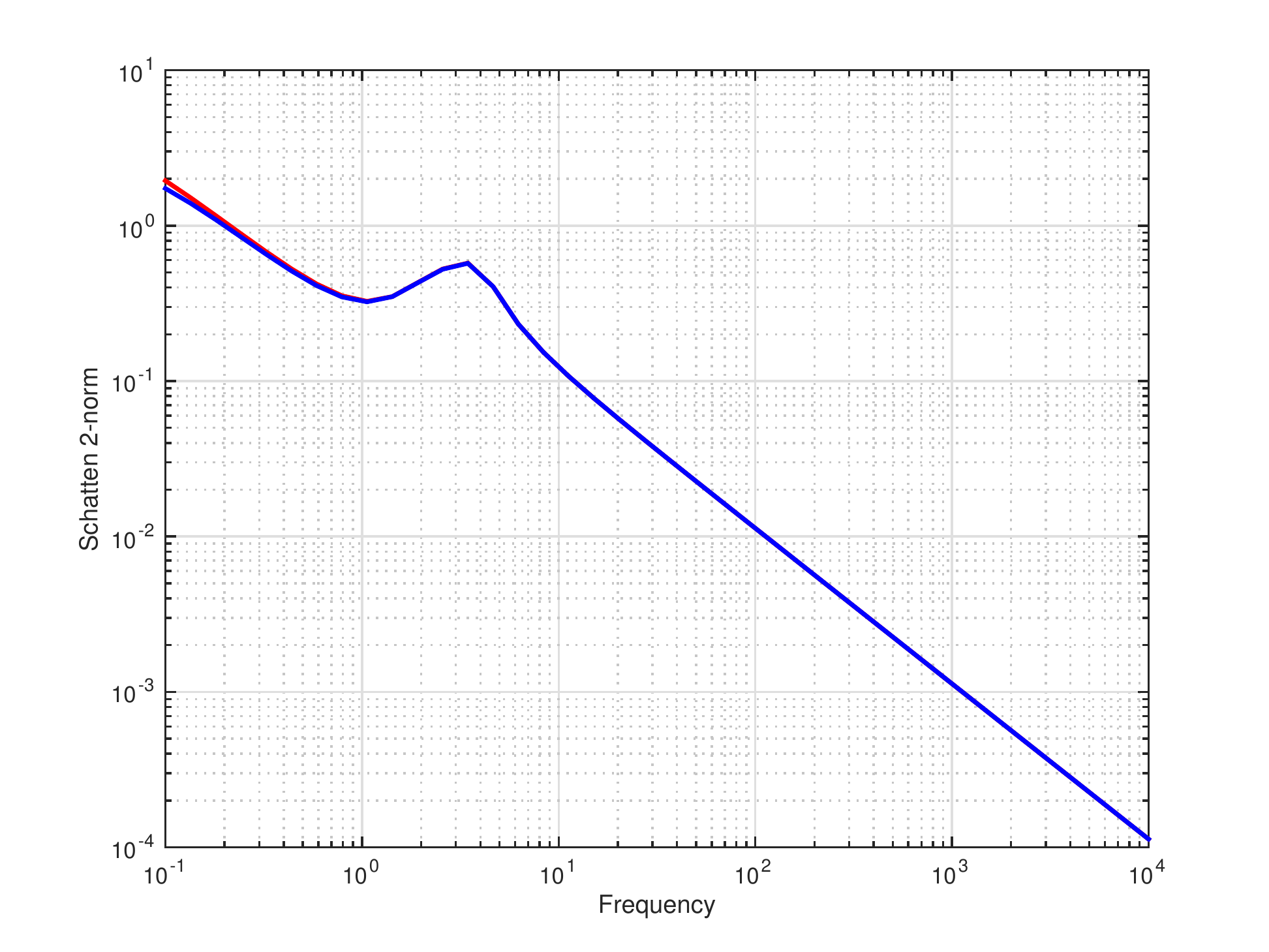}
    }
    \subfloat[\label{fig:schat_rho5}]{%
      \includegraphics[trim = 0mm 5mm 0mm 10mm, clip,width=0.45\textwidth]{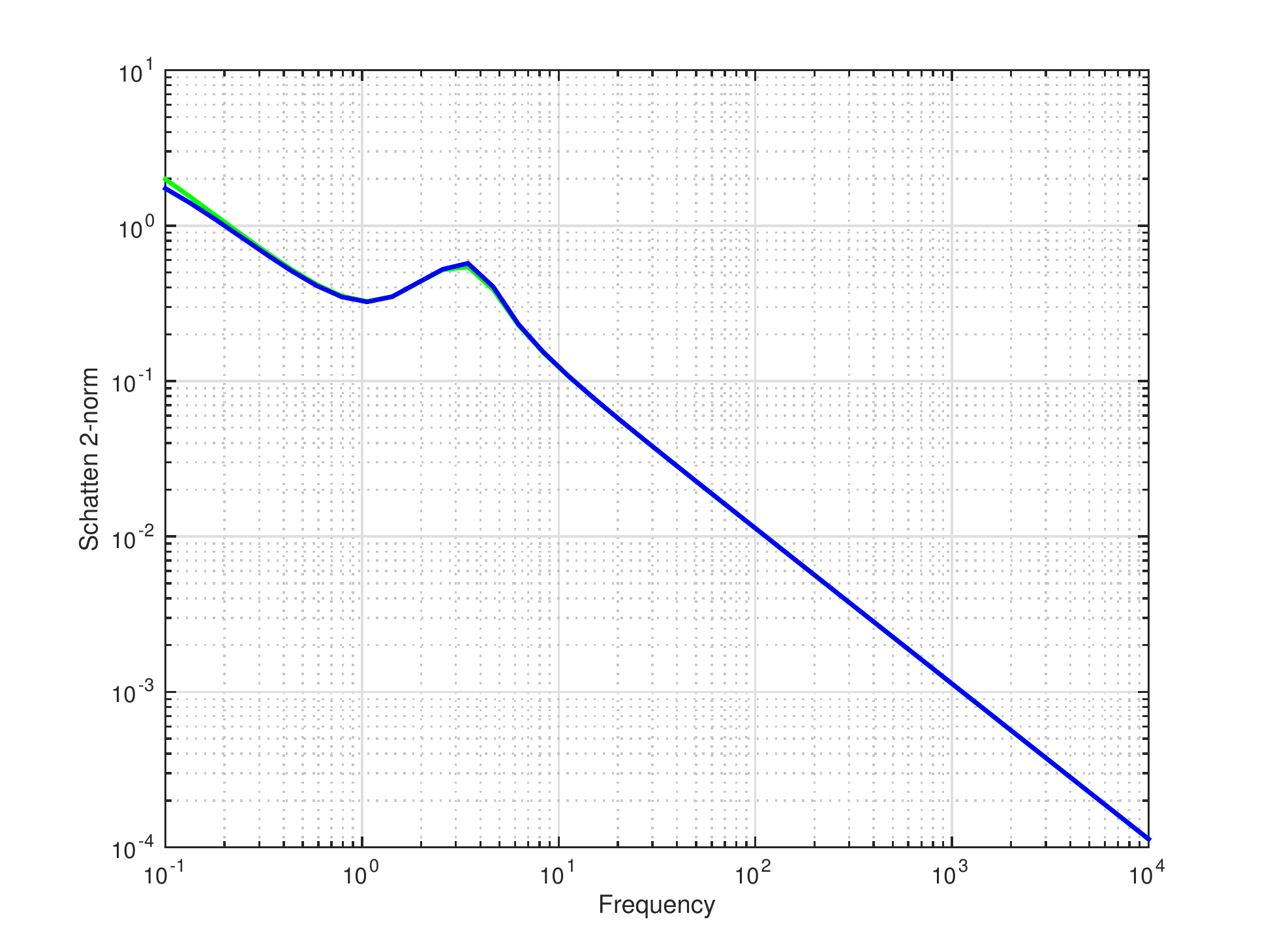}
    }
      \caption{Frequency characteristics of the closed-loop systems controlled by the $\text{LQR}$ (blue), the sparse controller (red) for the case $\rho_{\text{\it rel}}= 0 \%$, and the sparse controller (green) for the case $\rho_{\text{\it rel}}= 30 \%$. (a) and (b) depict maximum and minimum singular values for the cases of $\rho_{\text{\it rel}}= 0 \%$ and $\rho_{\text{\it rel}}= 30 \%$, respectively. (c) and (d) exhibit the Schatten 2-norm of the closed-loop system ((c) case $\rho_{\text{\it rel}}= 0 \%$ (d) case $\rho_{\text{\it rel}}= 30 \%$)}   \label{fig:freq_res}
\end{figure}


\bibliographystyle{IEEEtran}
\bibliography{main.bbl}


%
%

\end{document}